\documentclass[11pt,a4paper]{article}

\usepackage[headinclude]{typearea}
\usepackage[english]{babel} 
\usepackage[T1]{fontenc}
\usepackage{scrtime}
\usepackage{amsmath,amsthm,amsfonts,amssymb}
\usepackage {color}  
\usepackage {pifont}   
\usepackage {multicol,multirow}   
\usepackage{changes}
\usepackage[numbers]{natbib}  
\usepackage[normalem]{ulem}
\usepackage [scaled=.90]{helvet} 
\usepackage {courier} 
\usepackage {graphicx}
\usepackage {array} 
\usepackage {longtable}          
\usepackage{fancyvrb}
\usepackage{enumerate} 
\usepackage{ifpdf}
\usepackage{caption}
\usepackage{mathrsfs}
\usepackage{setspace}
\usepackage{marvosym}

\usepackage{mathtools}
\mathtoolsset{showonlyrefs}
\usepackage{verbatim}
\usepackage{dsfont}
\usepackage{hyperref}
\makeatletter
\def\namedlabel#1#2{\begingroup
    #2%
    \def\@currentlabel{#2}%
    \phantomsection\label{#1}\endgroup
}

\setlength{\voffset}{-2cm}
\setlength{\hoffset}{-1cm}
\setlength{\textheight}{24cm}
\setlength{\textwidth}{16cm}

\newtheorem{theorem}{Theorem}[section]

\newtheorem{lemma}[theorem]{Lemma}

\theoremstyle{definition}
\newtheorem{assumption}{Assumption}[section]

\newtheorem{remark}[theorem]{Remark}

\newcommand{\E}{{\mathbb{E}}}
\newcommand{\N}{{\mathbb{N}}}
\renewcommand{\P}{{\mathbb{P}}}

\newcommand{\R}{{\mathbb{R}}}

\newcommand{\F}{\mathcal{F}}

\newcommand{\diff}{\mathop{}\!\mathrm{d}}
\renewcommand{\P}{{\mathbb P}}

\newcommand{\bproof}{\begin{proof}}
\newcommand{\eproof}{\end{proof}}

\newcommand\independent{\protect\mathpalette{\protect\independenT}{\perp}}
\def\independenT#1#2{\mathrel{\rlap{$#1#2$}\mkern2mu{#1#2}}}

\hyphenation{Lip-schitz}

\bibliographystyle{plainnat}

\title{Randomized Milstein algorithm for approximation of solutions of jump-diffusion SDEs}

\author{Pawe{\l} Przyby{\l}owicz \and Verena Schwarz \and Michaela Sz\"olgyenyi}
\date{Preprint, December 2023}

\begin{document}

\maketitle

\begin{abstract}
We investigate the error of the randomized Milstein algorithm for solving scalar jump-diffusion stochastic differential equations. We provide a complete error analysis under substantially weaker assumptions than those known in the literature. In case the jump-commutativity condition is satisfied, we prove optimality of the randomized Milstein algorithm by establishing matching lower bounds. Moreover, we give some insight into the multidimensional case by investigating the optimal convergence rate for the approximation of jump-diffusion type L\'evys' areas. Finally, we report numerical experiments that support our theoretical findings. \newline
\newline
\noindent {\bf Keywords}: jump-diffusion SDEs, randomized Milstein algorithm, Lévy's area, $n$-th minimal error, optimality of algorithms, information-based complexity\newline
{\bf MSC} (2020): 68Q25, 65C30, 60H10
\end{abstract}

\section{Introduction}\label{sec:intro}

Consider the following jump-diffusion stochastic differential equation (SDE)
\begin{equation}
\begin{aligned}\label{SDE}
\diff X(t) = \mu(t,X(t))\diff t +\sigma(t,X(t))\diff W(t) + \rho(t,X(t-))\diff N(t),\quad t\in[0,T], \quad X(0) = X_0, 
\end{aligned}
\end{equation}
where $\mu,\sigma,\rho:[0,T] \times \R\to\R$ are (at least) measurable functions, $T\in(0,\infty)$, $W=(W(t))_{t\in[0,T]}$ is a standard Wiener process, and $N=(N(t))_{t\in[0,T]}$ is a homogeneous Poisson process with intensity $\lambda>0$ on a filtered probability space $(\Omega, \F, (\mathcal{F}_t)_{t\geq 0},\P)$ with a filtration $(\mathcal{F}_t)_{t\geq 0}$ that satisfies the usual conditions. Furthermore, we assume $p\in [2,\infty)$ and $X_0$ to be an $\F_0$-measurable random variable with $\E[|X_0|^{2p}]<\infty$.

Due to their numerous applications in mathematical finance, control theory, and the modelling of energy markets, cf. \cite{PBL2010,PS20,PSF21,situ2005}, jump-diffusion SDEs continue to gain scientific interest. Exact solutions are only available in very special cases. It is therefore important to develop efficient (or even in some sense optimal) numerical algorithms.

The current paper serves this purpose by providing a novel numerical scheme -- the randomized Milstein scheme. 
In contrast to the classical Milstein scheme, the drift coefficient is randomized in time, that is, instead of some time grid point $t_i$ we plug a realization of $\xi_i\colon \Omega \to [t_i,t_{i+1}]$ into $\mu$. This usually improves the convergence order.
We prove upper and lower error bounds and obtain $L^2$-optimality.

Randomized algorithms are, for example, studied in \cite{PMPP2014,PMPP2017,PP2015,PP20152,Przybylowicz2021}, where the authors consider the randomized Euler--Maruyama scheme for SDEs in the jump-free case, and provide error bounds and optimality results. The articles \cite{eisenmann2018} and \cite{Heinrich2019} discuss the properties of randomized quadrature rules used for approximating stochastic It\^o integrals. 
Error bounds and optimality results of the randomized Milstein scheme for SDEs without jumps are investigated in \cite{Morkisz2020} and \cite{kruse2018}. The latter construct a two-stage version of the randomized Milstein scheme and examine its error.
Furthermore, \cite{Biswas2022} study a randomized Milstein scheme for McKean-Vlasov SDEs with common noise.
In the current paper we extend the results from \cite{Morkisz2020} and \cite{Clark1980} to provide results for jump-diffusion SDEs.

Analysis of the lower bounds and optimality is usually provided in the Information-Based Complexity (IBC) framework, see \cite{TWW88}. This setting is widely used for investigating optimal algorithms for approximation of solutions of SDEs, for example in \cite{Heinrich2019,Hertling2001,PMPP2014,PMPP2017,Morkisz2020,PP2015,PP20152,PP2016,Przybylowicz2021,MY20}.

In the current paper, we consider scalar SDEs \eqref{SDE} with coefficients that are H\"older continuous in time and Lipschitz continuous and differentiable with Lipschitz continuous derivative in space. Under these assumptions we provide upper $L^p$-error bounds for the randomized Milstein algorithm. Our assumptions are significantly weaker than any other in the literature, where it is usually assumed that the coefficients are at least twice continuously differentiable in space, cf.~\cite{Morkisz2020,PBL2010}. If additionally the so-called jump-commutativity condition (JCC) is satisfied, we prove optimality of the randomized Milstein algorithm among those randomized algorithms that use finitely many evaluations of the driving processes. It turns out that randomization of the drift coefficient in time improves the convergence rate, see Remark \ref{gain_rm} and Theorem \ref{opt_rm_scalar}. 
Our numerical results match our theoretical findings. Most interestingly our experiments suggest that for jump-diffusion SDEs the $L^p$-convergence rate depends on $p$.

As a second contribution, we study the approximation of jump-diffusion L\'evys' areas. We establish optimality of the trapezoidal rule among those algorithms that use only a finite number of evaluations of $W$ and $N$; by this extending the results from \cite{Clark1980}.
As these L\'evys' areas are naturally generated by a two-dimensional SDE, our result implies lower error bounds for any class of multidimensional SDEs that contain the generator as a subproblem.
This, in turn, implies optimality of the multidimensional Euler--Maruyama algorithm among the class of algorithms that use only finitly many evaluations of $W$ and $N$.

To summarize, the main contributions of the current paper are:
\begin{itemize}
\setlength{\itemsep}{0pt}
    \item We introduce the randomized Milstein algorithm for scalar jump-diffusion SDEs \eqref{SDE} and perform a rigorous error analysis  under relatively mild assumptions on the coefficients (Theorems \ref{ThmUpperBound} and \ref{opt_rm_scalar}). In particular, we obtain $L^2$-optimality of the randomized Milstein algorithm.
    \item We prove optimality of the approximation of jump-diffusion L\'evys' areas (Theorem \ref{lb_Levy_rea}). This yields lower error bounds for the multidimensional case whenever L\'evys' areas appear (Remark \ref{RemMultSDE}). 
    \item We perform numerical experiments that match our theoretical results and observe a $p$-dependence of the convergence rate.
\end{itemize}

The paper is organized as follows. Section 2 states the assumptions under which we perform error analysis for the randomized Milstein algorithm. Section 3 is devoted to the error analysis of the randomized Milstein process. Lower bounds and optimality analysis in the IBC framework are given in Section 4. In Section 5 we show the results of the numerical experiments. Finally, some auxiliary results used in the proofs can be found in the Appendix.

\section{Preliminaries}\label{sec:pre}
For a random variable $X\colon\Omega\to\mathbb{R}$ we denote by $\|X\|_{L^p(\Omega)}=(\mathbb{E}[|X|]^p)^{1/p}$, where $p\in [2,\infty)$. We represent by $\lambda \in(0,\infty)$ the intensity of the Poisson process $N$. Further, we define by $\widetilde N = (\widetilde N(t))_{t\in[0,T]}$, $\widetilde N(t) = N(t)-\lambda t$ the compensated Poisson process. For $Z\in\{W,N\}$ we define by $\mathcal{F}^Z = (\mathcal{F}^Z_t)_{t\in[0,T]}$ the natural filtration with respect to $Z$.
It holds that the processes $W$ and $N$ are independent, cf.~\cite[p.~64, Theorem 97]{situ2005}, hence $\mathcal{F}_T^N \independent \mathcal{F}_T^W$.
We denote for all functions $f\in C^{0,1}([0,T]\times\R;\R)$ the partial derivative of $f$ with respect to $y$ by $\frac{\partial f}{\partial y}$. Further, we define for all functions $f\in C^{0,1}([0,T]\times\R;\R)$ the operators $L_1 f(t,y) = \sigma(t,y)\frac{\partial f}{\partial y}(t,y)$ and $L_{-1}f(t,y)=f(t,y+\rho(t,y))-f(t,y)$ for all $t\in [0,T]$, $y\in\R$.

We impose the following assumptions on the coefficient functions.

\begin{assumption}\label{Ass}
We assume for the functions $\mu,\sigma,\rho\colon[0,T]\times\R\to\R$ and for $p\in[2,\infty)$ that there exist constants $\varrho_1,\varrho_2,\varrho_3\in(0,1]$ such that:
\begin{itemize}
\item[\namedlabel{Ass1}{(i)}] For all $f\in\{\mu,\sigma,\rho\}$ holds $f\in C^{0,1}([0,T]\times\R;\R)$.
\item[\namedlabel{Ass2}{(ii)}] There exists a constant $K_1\in(0,\infty)$ such that for all $t,s\in [0,T]$, $y,z\in\R,$ and all $f\in\{\mu,\sigma,\rho\}$ it holds that
\begin{equation}
\begin{aligned}\label{AssLip}
|f(t,y)-f(t,z)|\leq K_1 |y-z|,
\end{aligned}
\end{equation}
\begin{equation}
\label{AssLipDer}
\Big|\frac{\partial f}{\partial y}(t,y) -\frac{\partial f}{\partial y}(t,z)\Big|\leq K_1 |y-z|,
\end{equation}
\begin{equation}
\label{AssHold}
|f(t,y) -f(s,y)|\leq K_1 (1+|y|)|t-s|^{\varrho_f},
\end{equation}
where $(\varrho_f,f)\in\{(\varrho_1,\mu),(\varrho_2,\sigma),(\varrho_3,\rho)\}$.
\item[\namedlabel{Ass3}{(iii)}] There exists a constant $K_2\in(0,\infty)$ such that for all $y\in\R$ and for all $t,s\in [0,T]$ holds
\begin{equation}
    \Big|\frac{\partial \mu}{\partial y}(t,y)-\frac{\partial \mu}{\partial y}(s,y)\Big|\leq K_2(1+|y|) |t-s|^{\varrho_1}.
\end{equation}
\item[\namedlabel{Ass4}{(iv)}] There exists a constant $K_3\in(0,\infty)$ such that for all $t\in [0,T]$, $y,z\in\R$, and all $f\in\{\sigma,\rho\}$ it holds that
\begin{equation}
\begin{aligned}\label{AssLipL1}
|L_1 f(t,y) - L_1 f(t,z)|\leq K_3|y-z|.
\end{aligned}
\end{equation}
\item[\namedlabel{Ass5}{(v)}] For the initial value $X_0$ we assume that it is an $\mathcal{F}_0$-measurable random variable and has finite $L^{2p}$- norm, i.e.
\begin{equation}
    \|X_0\|_{L^{2p}(\Omega)}<\infty.
\end{equation}
\end{itemize}
\end{assumption}

We obtain by the Lipschitz assumption \eqref{AssLip} that  for all $(t,y)\in [0,T]\times\R$ and $f\in\{\mu,\sigma,\rho\}$ holds
\begin{equation}
\begin{aligned}\label{LinGrowth}
|f(t,y)|\leq K_4(1+|y|),
\end{aligned}
\end{equation}
with $K_4=\max\limits_{i=1,2,3}\{\max\{|f(0,0)|,K_1\}+K_1T^{\varrho_i}\}$. Further, Assumption \ref{Ass} \ref{Ass1} and the Lipschitz continuity of $f$ in space \eqref{AssLip} imply
\begin{equation}
\begin{aligned}\label{BddDer}
\Big|\frac{\partial f}{\partial y}(t,y)\Big|\leq K_1.
\end{aligned}
\end{equation} 
Since for $f\in\{\mu,\sigma,\rho\}$ the first order partial derivative $\frac{\partial f}{\partial y}(t,\cdot)$ is Lipschitz continuous, it follows that $\frac{\partial f}{\partial y}(t,\cdot)$ is absolutely continuous. 
Hence, for all $t\in [0,T]$ the second partial derivative $\frac{\partial^2 f}{\partial y^2}(t,\cdot)$ exists almost everywhere on $\R$. Let us denote for all $t\in [0,T]$ by $S_f(t)$ the set of Lebesgue measure $0$ for which the second partial derivative $\frac{\partial^2 f}{\partial y^2}(t,\cdot)$ does not exist. Then for all $f\in\{\mu,\sigma,\rho\}$, all $t\in [0,T]$, and all $y\in\R\setminus S_f(t)$ it holds that
\begin{equation}
\begin{aligned}\label{BddSecDer}
\Big|\frac{\partial^2 f}{\partial y^2}(t,y)\Big|\leq K_1.
\end{aligned}
\end{equation}
On $S_f(t)$ we define $\frac{\partial^2 f}{\partial y^2}(t,\cdot)\equiv 0$. At this point, we like to emphasise that the choice of the values of $\frac{\partial^2 f}{\partial y^2}(t,\cdot)$ on $S_f(t)$ does not influence the proof of the main result. This is because we employ local time theory, which implies that the bounds we obtain are independent of these values. In addition, there exists a constant $K_5\in(0,\infty)$ such that for all $f\in\{\sigma,\rho\}$ and for all $(t,y)\in [0,T]\times\R$ holds
\begin{equation}
\label{ling_l1l1}
    \max\{|L_1f(t,y)|,|L_{-1}f(t,y)|\}\leq K_5(1+|y|).
\end{equation}

The existence and uniqueness of a strong solution to the SDE \eqref{SDE} is well-known under Assumption \ref{Ass}, e.g.~\cite[p.~255, Theorem 6]{Protter2005}. Further, since  $\mathbb{E}[|X_0|^{2p}]<\infty$ by \cite[Lemma 1]{Przybylowicz2021} there exists $K_6\in(0,\infty)$ such that it holds
\begin{equation}
\begin{aligned}
\label{MomEstSol}
\E\Big[\sup_{0\leq t\leq T} |X(t)|^{2p}\Big]\leq K_6,
\end{aligned}
\end{equation}
and further for all $s,t\in[0,T]$ holds 
\begin{equation}
\begin{aligned}
\label{MomEstIncSol}
\E\Big[|X(t)-X(s)|^p\Big]\leq K_6 |t-s|.
\end{aligned}
\end{equation}
Note that the estimate \eqref{MomEstIncSol} can be improved if $\rho \equiv 0$.

Under the Assumption \ref{Ass} (i) and (ii) the Meyer-It\^o formula \cite[p.~221, Theorem  71]{Protter2005} is applicable to the function $\R\ni y\mapsto f(t,y)\in \mathbb{R}$ and the solution process $(X(s))_{s\in [v_1,v_2]}$ for all $f\in\{\mu,\sigma,\rho\}$ and all $t\in [v_1,v_2]\subset [0,T]$. We obtain the following parametric version of the Meyer-It\^o formula: For all $s,t\in [v_1,v_2]$ it holds that
\begin{equation}
\label{MeyerItoFormula}
f(t,X(s)) = f(t,X(v_1)) + \int\limits_{v_1}^s \alpha(f,t,u)\diff u + \int\limits_{v_1}^s \beta(f,t,u)\diff W(u)
+\int\limits_{v_1}^s \gamma(f,t,u)\diff N(u),
\end{equation}
where 
\begin{equation}
\begin{aligned}\label{LFormulas}
\alpha(f,t,u) &= \alpha_1(f,t,u) + \alpha_2(f,t,u),\\
\alpha_1(f,t,u) &= \frac{\partial f}{\partial y}(t,X(u))\mu(u,X(u)),\\
\alpha_2(f,t,u) &= \frac{1}{2} \frac{\partial^2 f}{\partial y^2}(t,X(u))\sigma^2(u,X(u)),\\
\beta(f,t,u) &= \frac{\partial f}{\partial y}(t,X(u))\sigma(u,X(u)),\\
\gamma(f,t,u) &= f(t,X(u-)+\rho(u,X(u-)))-f(t,X(u-)).
\end{aligned}
\end{equation}
We refer to Lemma \ref{MEstL} for fundamental estimations of the above defined functions. 

Next, we define the randomized Milstein algorithm. For $n\in\N$ we set $\delta =T/n$ and define $ t_i = i\delta$ for all $ i\in\{0,\ldots,n\}$. Further, we use the notation $
\Delta Y_i = Y(t_{i+1}) - Y(t_i)$  for all $i\in\{0,1,\ldots,n-1\}$
and 
\[\displaystyle{I_{s,t}(Y,Z) = \int\limits_s^t\int\limits_s^{u-} \diff Y(v) \diff Z(u)}\]
for all $Y,Z\in\{W,N\}$ and $s,t\in[0,T]$. It holds that
\begin{equation}
\label{JCC_int}
    I_{s,t}(N,W)+I_{s,t}(W,N)=(W(t)-W(s))(N(t)-N(s))
\end{equation}
and that the sigma-field generated by $I_{s,t}(Y,Z)$ and $\F_s$ are independent, \cite[Fact B.28 (ii)]{AK2021}.  Let $\{\xi_i\}_{i=0}^{n-1}$ be independent random variables on the probability space $(\Omega,\F,\mathbb{P})$, such that the sigma-field generated by $\xi_0,\xi_1,\ldots,\xi_{n-1}$ and $\F_{T}$ are independent and $\xi_i$ is uniformly distributed on $[t_i,t_{i+1}]$ for $i\in\{0,\ldots, n-1\}$. Then the randomized Milstein algorithm $X^{(\delta)}$ is defined recursively through
\begin{equation}
\begin{aligned}\label{MS}
X^{(\delta)}(t_0) &= X_0,\\
X^{(\delta)}(t_{i+1}) &= X^{(\delta)}(t_{i}) + \mu(\xi_i,X^{(\delta)}(t_{i})) \delta + \sigma(t_{i},X^{(\delta)}(t_{i}))\Delta W_i +\rho(t_{i},X^{(\delta)}(t_{i})) \Delta N_i\\
&\quad+ L_1\sigma(t_{i},X^{(\delta)}(t_{i})) I_{t_i,t_{i+1}}(W,W)
+ L_{-1}\rho(t_{i},X^{(\delta)}(t_{i})) I_{t_i,t_{i+1}}(N,N)\\
&\quad+ L_{-1}\sigma(t_{i},X^{(\delta)}(t_{i})) I_{t_i,t_{i+1}}(N,W)
+ L_{1}\rho(t_{i},X^{(\delta)}(t_{i})) I_{t_i,t_{i+1}}(W,N),\\
&\quad\quad\quad\quad\quad\quad\quad\quad \quad\quad\quad\quad\quad\quad\quad\quad \quad\quad\quad\quad\quad\quad\quad\quad i \in\{ 0,\ldots, n-1\}.
\end{aligned}
\end{equation}
To analyse the error of the randomized Milstein algorithm we additionally define the time-continuous Milstein approximation $(X^{(\delta)}_c(t))_{t\in[0,T]}$. It is also known as the randomized Milstein process and is defined as
\begin{equation}
\begin{aligned}\label{MP}
X^{(\delta)}_c(t_0) &= X_0,\\
X^{(\delta)}_c(t) &= X^{(\delta)}_c(t_{i}) + \mu(\xi_i,X^{(\delta)}_c(t_{i})) (t-t_{i}) + \sigma(t_{i},X^{(\delta)}_c(t_{i}))(W(t) - W(t_i))\\ 
&\quad +\rho(t_{i},X^{(\delta)}_c(t_{i}))( N(t) - N(t_i))\\
&\quad+ L_1\sigma(t_{i},X^{(\delta)}_c(t_{i})) I_{t_i,t}(W,W)
+ L_{-1}\rho(t_{i},X^{(\delta)}_c(t_{i})) I_{t_i,t}(N,N)\\
&\quad+ L_{-1}\sigma(t_{i},X^{(\delta)}_c(t_{i})) I_{t_i,t}(N,W)
+ L_{1}\rho(t_{i},X^{(\delta)}_c(t_{i})) I_{t_i,t}(W,N),
\end{aligned}
\end{equation}
for $t\in(t_i,t_{i+1}]$, $i \in\{0,\ldots, n-1\}$. Then it holds that for all $i\in\{0,\ldots,n\}$ that $X^{(\delta)}(t_{i}) = X^{(\delta)}_c(t_{i})$. 

Now, similarly to \cite{Morkisz2020}, we expand the filtration $(\mathcal{F}_{t})_{t\in[0,T]}$ in the following way: We denote the sigma-algebra generated by $\xi_0,\ldots,\xi_{n-1}$ as $\mathcal{G}^n$, and $\mathcal{\bar F}^n_t$ as the sigma-algebra generated by $\mathcal{F}_t$ and $\mathcal{G}^n$. Since $\mathcal{G}^n$ and $\mathcal{F}_{T}$ are independent, $W$ and $N$ are still Wiener and Poisson processes with respect to $(\mathcal{\bar F}^n_{t})_{t\in[0,T]}$, respectively. Seeing that in this article we are integrating 
\begin{itemize}
\setlength{\itemsep}{0pt}
    \item $(\mathcal{\bar F}^n_{t})_{t\in[0,T]}$-progressively measurable processes with respect to the continuous $(\mathcal{\bar F}^n_{t})_{t\in[0,T]}$-semimartingales $(t)_{t\in [0,T]}$, $(W(t))_{t\in [0,T]}$,
    \item $(\mathcal{\bar F}^n_{t})_{t\in[0,T]}$-adapted càglàd processes with respect to the càdlàg $(\mathcal{\bar F}^n_{t})_{t\in[0,T]}$-semimartingale $(N(t))_{t\in [0,T]}$,
\end{itemize}  
the (stochastic) integrals are well-defined, e.g.~\cite{Protter2005}. Moreover, the randomized Milstein process is  $(\mathcal{\bar F}^n_{t})_{t\in[0,T]}$-progressively measurable, since it is càdlàg and adapted.

Note that the randomised Milstein process cannot be implemented because it requires all values of $W$ and $N$, which are inaccessible. However, we will use it as an auxiliary scheme for our proof that the randomized Milstein algorithm has convergence order $\delta^{\min\{\frac{2}{p},\varrho_1+\frac{1}{p},\varrho_2,\varrho_3\}}$. 
\section{Error analysis for the randomized Milstein process}
Let for all $i\in\{1,\ldots,n\}$,
\begin{equation}
    U_i=(t_i,X^{(\delta)}(t_{i})), \quad V_i=(\xi_i,X^{(\delta)}(t_{i})).
\end{equation}
The processes $X$ and $X^{(\delta)}_c$ can be written for all  $t\in[0,T]$ as
\begin{equation}
\begin{aligned}\label{XNot}
X(t) =X(0) + A(t) + B(t) + C(t),
\end{aligned}
\end{equation}
\begin{equation}
\begin{aligned}
\label{XdeltaNot}
X^{(\delta)}_c(t) = X(0) + A^{(\delta)}(t) + B^{(\delta)}(t) + C^{(\delta)}(t),
\end{aligned}
\end{equation}
where 
\begin{equation}
\begin{aligned}
A(t) &= \int\limits_0^t \sum_{i=0}^{n-1} \mu(s,X(s))\mathds{1}_{(t_i,t_{i+1}]}(s) \diff s,\\
B(t) &= \int\limits_0^t \sum_{i=0}^{n-1} \sigma(s,X(s))\mathds{1}_{(t_i,t_{i+1}]}(s) \diff W(s),\\
C(t) &= \int\limits_0^t \sum_{i=0}^{n-1} \rho(s,X(s-))\mathds{1}_{(t_i,t_{i+1}]}(s) \diff N(s),
\end{aligned}
\end{equation}
\begin{equation}
\begin{aligned}
A^{(\delta)}(t) &= \int\limits_0^t \sum_{i=0}^{n-1} \mu(V_i)\mathds{1}_{(t_i,t_{i+1}]}(s) \diff s,\\
B^{(\delta)}(t) &= \int\limits_0^t \sum_{i=0}^{n-1} \Big( \sigma(U_i) + \int\limits_{t_i}^s L_1 \sigma(U_i)\diff W(u) + \int\limits_{t_i}^s L_{-1} \sigma(U_i)\diff N(u)\Big)\mathds{1}_{(t_i,t_{i+1}]}(s) \diff W(s),\\
C^{(\delta)}(t) &= \int\limits_0^t \sum_{i=0}^{n-1} \Big( \rho(U_i) + \int\limits_{t_i}^s L_1 \rho(U_i)\diff W(u) + \int\limits_{t_i}^{s-} L_{-1} \rho(U_i)\diff N(u)\Big)\mathds{1}_{(t_i,t_{i+1}]}(s) \diff N(s).
\end{aligned}
\end{equation}
\begin{lemma}
\label{Milstein_proc_lemma}
Under the Assumption \ref{Ass} there exists a constant $K_8 \in (0,\infty)$ such that for all $n\in\N$ it holds that
\begin{equation}
\begin{aligned}\label{MomEst}
\sup_{0\leq t\leq T} \E\big[|X^{(\delta)}_c(t)|^p\big] \leq K_8.
\end{aligned}
\end{equation}
\end{lemma}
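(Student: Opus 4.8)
The plan is to establish the uniform moment bound \eqref{MomEst} by a discrete Gronwall argument on the grid points, combined with a short interpolation step to cover times $t \in (t_i, t_{i+1})$. First I would work with the recursive representation of $X^{(\delta)}_c$ at grid points (which coincides with $X^{(\delta)}$): writing $\Phi_i := X^{(\delta)}(t_i)$, we have
\begin{equation}
\Phi_{i+1} = \Phi_i + \mu(\xi_i,\Phi_i)\delta + \sigma(t_i,\Phi_i)\Delta W_i + \rho(t_i,\Phi_i)\Delta N_i + \text{(four iterated-integral terms)}.
\end{equation}
Using the linear growth bounds \eqref{LinGrowth} and \eqref{ling_l1l1} on $\mu,\sigma,\rho,L_1\sigma,L_{-1}\rho,L_{-1}\sigma,L_1\rho$, every coefficient appearing is bounded by $K(1+|\Phi_i|)$, so $|\Phi_{i+1}| \le |\Phi_i| + K(1+|\Phi_i|)\zeta_i$ where $\zeta_i$ collects $\delta$, $|\Delta W_i|$, $|\Delta N_i|$, and the absolute values of the four iterated integrals over $[t_i,t_{i+1}]$. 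The key probabilistic input is that $\sigma(\xi_0,\dots,\xi_{n-1})$ and $\F_\infty$ are independent, that the increments $\Delta W_i, \Delta N_i$ and the iterated integrals $I_{t_i,t_{i+1}}(\cdot,\cdot)$ are $\F_{t_{i+1}}$-measurable and independent of $\F_{t_i}$ (hence of $\Phi_i$, which is $\bar\F^n_{t_i}$-measurable), and that all these quantities have finite $p$-th moments bounded uniformly with $\|\cdot\|_{L^p(\Omega)} \le C\delta^{1/2}$ for $\Delta W_i$ and the mixed/double integrals, $\le C\delta$ for the drift term and $I(N,N)$ (after centering, or $\le C\delta^{1/2}$ crudely). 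These moment bounds for the iterated integrals are exactly the type of estimates one expects to be collected in an appendix lemma; I would cite them.

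The core step is then a recursive $L^p$ estimate. Taking $p$-th powers, using $(a+b)^p \le (1+\epsilon)^{p-1}a^p + C_{\epsilon,p} b^p$ or simply $|\Phi_{i+1}|^p \le (1 + C\zeta_i)^p |\Phi_i|^p$-type expansions, and conditioning on $\bar\F^n_{t_i}$ to pull $\Phi_i$ out of the expectation over the independent increments, one obtains
\begin{equation}
\E[|\Phi_{i+1}|^p] \le (1 + C\delta)\,\E[|\Phi_i|^p] + C\delta,
\end{equation}
where the constant $C$ depends on $p$, $T$, $\lambda$, $K_1,\dots,K_5$ but not on $n$. Here one must be careful that cross terms like $\E[|\Phi_i|^{p-1}\sigma(t_i,\Phi_i)\Delta W_i]$ do not all vanish by the martingale property alone once absolute values are taken, so I would instead bound $|\Phi_{i+1}| \le |\Phi_i|(1 + C\zeta_i) + C\zeta_i$, raise to the $p$-th power, expand binomially, and take expectations term by term, using $\E[\zeta_i^k \mid \bar\F^n_{t_i}] \le C_k \delta^{\min\{k/2,1\}} \le C_k \delta$ for $k \ge 2$ and $\E[\zeta_i \mid \bar\F^n_{t_i}] \le C\delta^{1/2}$ — wait, the $\delta^{1/2}$ terms multiplied by $\E[|\Phi_i|^p]$ would only give $(1+C\sqrt\delta)$, which still iterates fine since $(1+C\sqrt\delta)^n \le (1+C\sqrt\delta)^{T/\delta} \le e^{CT/\sqrt\delta}$ — no, that blows up. So the right move is: the linear-in-$|\Phi_i|$ term must come with a genuinely $O(\delta)$ coefficient in expectation. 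This is where the martingale structure is used: write $\Phi_{i+1} = \Phi_i + \delta M^{(1)}_i + M^{(2)}_i$ where $M^{(2)}_i$ has conditional mean of order $\delta$ (the $\Delta W_i$, $I(W,W)$, $I(W,N)+I(N,W)$ pieces integrate to mean zero or mean $O(\delta)$) and conditional second moment $O(\delta)$; then $\E[|\Phi_{i+1}|^p \mid \bar\F^n_{t_i}] \le |\Phi_i|^p + C\delta(1+|\Phi_i|^p)$ follows from an It\^o/Taylor expansion of $x \mapsto |x|^p$ or from the Burkholder-type estimate applied to the one-step martingale increment. Iterating the resulting recursion $a_{i+1} \le (1+C\delta)a_i + C\delta$ with $a_0 = \E[|X_0|^p] < \infty$ (finite by \ref{Ass5}) yields $a_i \le e^{C T}(\E[|X_0|^p] + 1) =: K_8'$ uniformly in $i$ and $n$.

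Finally, to get the supremum over all $t \in [0,T]$ rather than just grid points, I would use the explicit form \eqref{MP}: for $t \in (t_i, t_{i+1}]$, $X^{(\delta)}_c(t) - \Phi_i$ is a sum of terms of the form $c(U_i \text{ or } V_i)$ times an increment or iterated integral over $[t_i,t]$, each bounded in $L^p$ by $C(1 + \|\Phi_i\|_{L^p})\,\delta^{1/2} \le C(1 + (K_8')^{1/p})$; hence $\sup_{t_i < t \le t_{i+1}} \E[|X^{(\delta)}_c(t)|^p] \le 2^{p-1}(\E[|\Phi_i|^p] + C) \le K_8$, using again the independence of the increments from $\bar\F^n_{t_i}$ to handle the conditional expectations. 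Taking the maximum over $i \in \{0,\dots,n-1\}$ gives \eqref{MomEst}. The main obstacle is organizing the one-step estimate so that the coefficient of $\E[|\Phi_i|^p]$ is $1 + O(\delta)$ and not $1 + O(\sqrt\delta)$: this forces careful bookkeeping of which one-step increment terms are martingale increments (mean zero) versus finite-variation (mean $O(\delta)$), and an appeal to the Burkholder--Davis--Gundy inequality for the martingale part rather than a crude triangle-inequality bound; the Poisson term needs the compensator subtracted to land in the martingale bucket, or else one uses $\E[(\Delta N_i)^k] = O(\delta)$ for all $k \ge 1$ directly, which already behaves well.
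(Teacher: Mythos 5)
Your argument is correct in substance, but it takes a genuinely different route from the paper. You run a discrete one-step recursion at the grid points, splitting the increment $\Phi_{i+1}-\Phi_i$ into a part with conditional mean $O(\delta)(1+|\Phi_i|)$ and a part with conditional $p$-th moment $O(\delta)(1+|\Phi_i|^p)$, expand $|x|^p$ to get $\E[|\Phi_{i+1}|^p\mid\bar{\mathcal F}^n_{t_i}]\leq|\Phi_i|^p+C\delta(1+|\Phi_i|^p)$, iterate (discrete Gronwall), and finish with an interpolation step for $t\in(t_i,t_{i+1})$; the obstacle you identify — that a naive triangle-inequality bound only yields a $1+O(\sqrt{\delta})$ coefficient, which does not iterate — is real, and your fix (martingale versus finite-variation bookkeeping, with the Poisson pieces handled via $\E[|\Delta N_i|^k]=O(\delta)$ for every $k\geq 1$ rather than $O(\delta^{k/2})$) is the right one. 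The paper avoids this issue entirely: it writes $X^{(\delta)}_c(t)=X(0)+\int_0^t\Psi_{1,n}\,\mathrm{d}s+\int_0^t\Psi_{2,n}\,\mathrm{d}W+\int_0^t\Psi_{3,n}\,\mathrm{d}N$ with the explicit piecewise integrands from \eqref{XdeltaNot}, applies the combined Burkholder--Davis--Gundy/Kunita estimate of Lemma \ref{BDGaKunita}, which gives $\E[|\int_0^t\Psi_{k,n}\,\mathrm{d}Z|^p]\leq\hat c\int_0^t\E[|\Psi_{k,n}(s)|^p]\,\mathrm{d}s$ uniformly for all three integrators (including the uncompensated Poisson process), bounds $\E[|\Psi_{k,n}(s)|^p]$ by linear growth in terms of $\E[|X^{(\delta)}(t_i)|^p]$, and closes with the continuous Gronwall lemma applied to $t\mapsto\sup_{0\leq s\leq t}\E[|X^{(\delta)}_c(s)|^p]$ after an a priori finiteness check (which you get for free by induction in the discrete setting). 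The paper's route delivers the $O(\delta)$ coefficient with no compensator-splitting and simultaneously covers the off-grid times, at the price of invoking the integral maximal inequality; your route is more elementary in its probabilistic inputs but demands the careful one-step bookkeeping you describe. One small correction: your claimed $L^p$ rates $\|\Delta N_i\|_{L^p}\leq C\delta^{1/2}$ and $\|I_{t_i,t_{i+1}}(N,N)\|_{L^p}\leq C\delta$ are not accurate for general $p$ (the correct orders are $\delta^{1/p}$ and $\delta^{2/p}$ respectively), but since all that is needed is that the $p$-th moments are $O(\delta)$, this does not affect the conclusion.
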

\begin{proof}
Knowing that $\mathbb{E}[|X_0|^p]<\infty$ we obtain by induction that
\begin{equation}
\label{est_mom_ind}
    \max\limits_{0\leq i\leq n}\E\big[|X^{(\delta)}(t_i)|^p\big]<\infty.
\end{equation}
Further, by \eqref{est_mom_ind} and \eqref{MP} for all $n\in\N$ exists a constant $c_1\in(0,\infty)$ such that
\begin{equation}
\label{finite_mom_lp}
    \sup\limits_{0\leq t\leq T}\mathbb{E}\big[|X^{\delta}_c(t)|^p\big]\leq c_1(1+\max\limits_{0\leq i\leq n-1}\mathbb{E}\big[|X^{(\delta)}(t_i)|^p\big])<\infty.
\end{equation}

We denote for all $t\in [0,T]$,
\begin{equation}\label{41eq1}
    X_c^{(\delta)}(t)=X(0)+\int\limits_0^t \Psi_{1,n}(s)\diff s+\int\limits_0^t \Psi_{2,n}(s)\diff W(s)+\int\limits_0^t \Psi_{3,n}(s)\diff N(s),
\end{equation}
where
\begin{equation}\label{41eq2}
    \Psi_{1,n}(s)=\sum_{i=0}^{n-1} \mu(V_i)\mathds{1}_{(t_i,t_{i+1}]}(s),
\end{equation}
\begin{equation}\label{41eq3}
    \Psi_{2,n}(s)= \sum_{i=0}^{n-1} \Bigg( \sigma(U_i) + \int\limits_{t_i}^s L_1 \sigma(U_i)\diff W(u)+ \int\limits_{t_i}^s L_{-1} \sigma(U_i)\diff N(u)\Bigg)\mathds{1}_{(t_i,t_{i+1}]}(s),
\end{equation}
\begin{equation}\label{41eq4}
    \Psi_{3,n}(s)= \sum_{i=0}^{n-1} \Bigg( \rho(U_i) + \int\limits_{t_i}^s L_1 \rho(U_i)\diff W(u)+ \int\limits_{t_i}^{s-} L_{-1} \rho(U_i)\diff N(u)\Bigg)\mathds{1}_{(t_i,t_{i+1}]}(s).
\end{equation}
By Lemma \ref{BDGaKunita} holds for all $(k,Z)\in \{(1,s), (2,W), (3,N)\}$ that
\begin{equation}\label{41eq5}
    \mathbb{E}\Bigg[\Biggl|\int\limits_0^t \Psi_{k,n}(s)dZ(s)\Biggl|^p\Bigg]\leq \hat c\int\limits_0^t\mathbb{E}\big[|\Psi_{k,n}(s)|^p\big] ds.
\end{equation}
By \eqref{LinGrowth} there exist constants $c_2,c_3\in(0,\infty)$ such that
\begin{equation}\label{41eq6}
    \mathbb{E}\big[|\Psi_{1,n}(s)|^p\big]
    \leq K_4^p\sum\limits_{i=0}^{n-1}\mathbb{E}\big[(1+|X^{(\delta)}(t_i)|)^p\big]\cdot \mathds{1}_{(t_i,t_{i+1}]}(s)
    \leq c_2+c_3\sum\limits_{i=0}^{n-1}\mathbb{E}\big[|X^{(\delta)}(t_i)|^p\big]\cdot \mathds{1}_{(t_i,t_{i+1}]}(s).
\end{equation}
Using \eqref{LinGrowth}, \eqref{ling_l1l1}, and Lemma \ref{BDGaKunita} we obtain that there exist constants $c_4,c_5\in(0,\infty)$ such that for all $(k,f)\in\{(2,\sigma), (3,\rho)\}$ holds
\begin{equation}
\begin{aligned}\label{41eq7}
    \int\limits_0^t \mathbb{E}\big[|\Psi_{k,n}(s)|^p\big] \diff s&\leq  \hat c\, \E\Bigg[ \sum_{i=0}^{n-1}\int\limits_0^t | f(U_i) |^p \mathds{1}_{(t_i,t_{i+1}]}(s) \diff s \Bigg]\\
    &\quad  +  \hat c\, \E\Bigg[ \sum_{i=0}^{n-1}\int\limits_0^t \Bigg| \int\limits_{t_i}^s L_1 f(U_i)\diff W(u)\Bigg|^p \mathds{1}_{(t_i,t_{i+1}]}(s) \diff s \Bigg]\\ 
    &\quad +  \hat c\, \E\Bigg[ \sum_{i=0}^{n-1}\int\limits_0^t \Bigg| \int\limits_{t_i}^s L_{-1} f(U_i)\diff N(u)\Bigg|^p \mathds{1}_{(t_i,t_{i+1}]}(s) \diff s \Bigg]\\
    &    \leq c_4+c_5\int\limits_0^t \sum\limits_{i=0}^{n-1}\mathbb{E}\big[|X^{(\delta)}(t_i)|^p\big]\cdot \mathds{1}_{(t_i,t_{i+1}]}(s)\diff s.
    \end{aligned}
\end{equation}
In this step we used that $\displaystyle{\int\limits_{t_i}^s L_{-1} f(U_i)\diff N(u)}$ and $\displaystyle{\int\limits_{t_i}^{s-} L_{-1} f(U_i)\diff N(u)}$ differ at most in finitely many points. Combining \eqref{41eq1}, \eqref{41eq6}, and \eqref{41eq7} we obtain that there exist constants $c_6,c_7,c_8\in(0,\infty)$ such that
\begin{equation}
    \begin{aligned}\label{41eq8}
    &\E\Big[ \big|X_c^{(\delta)}(t)\big|^p\Big]\leq c_6\Bigg( \E\Big[|X(0)|^p\Big] + \sum_{k=1}^{3} \int\limits_0^t \E\Big[ |\Psi_{k,n}(s) |^p\Big] \diff s \Bigg)\\
    &\leq c_7 \Big(\E\Big[|X(0)|^p\Big]+1 \Big) + c_8 \int\limits_0^t \sup_{0\leq u\leq s} \E\Big[\big|X_c^{(\delta)}(u)\big|^p\Big]\diff s.
    \end{aligned}
\end{equation}
Hence,
\begin{equation}
\begin{aligned}\label{41eq81}
    &\sup_{0\leq s \leq t}\E\Big[ \big|X_c^{(\delta)}(s)\big|^p\Big]
    \leq c_7 \Big(\E\Big[|X(0)|^p\Big]+1 \Big) + c_8 \int\limits_0^t \sup_{0\leq u\leq s} \E\Big[\big|X_c^{(\delta)}(u)\big|^p\Big]\diff s.
    \end{aligned}
\end{equation}
The mapping $t\mapsto \sup_{0\leq s \leq t}\E\Big[ \big|X_c^{(\delta)}(s)\big|^p\Big]$ is Borel measurable since it is monotone. In addition it is bounded by \eqref{finite_mom_lp}. Consequently, applying Gronwall's lemma proves the claim. 
\end{proof}

Next we prove upper bounds for the randomized Milstein algorithm's convergence rate.

\begin{theorem}
\label{ThmUpperBound}
Let Assumption \ref{Ass} hold. Then there exists  $C\in (0,\infty)$ such that for all $n\in\N$ holds
\begin{equation}
\begin{aligned}\label{Main}
\sup_{0\leq t\leq T} \|X(t) - X^{(\delta)}_c(t)\|_{L^p(\Omega)}
\leq  C\delta^{\min\{\frac{2}{p},\varrho_1+\frac{1}{p},\varrho_2,\varrho_3\}}.
\end{aligned}
\end{equation}
\end{theorem}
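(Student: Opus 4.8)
(I write $a\lesssim b$ if $a\le Cb$ for a constant $C$ independent of $n$.)

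The plan is to control $e_p(t):=\sup_{0\le s\le t}\E\big[|X(s)-X^{(\delta)}_c(s)|^p\big]$ and to close a Gronwall inequality for it. By \eqref{MomEstSol} and Lemma~\ref{Milstein_proc_lemma} the function $e_p$ is finite, and being monotone it is Borel measurable; hence it suffices to show that, with $r:=\min\{\tfrac2p,\varrho_1+\tfrac1p,\varrho_2,\varrho_3\}$, one has $e_p(t)\lesssim\delta^{pr}+\int_0^t e_p(s)\,\diff s$, after which Gronwall's lemma gives the assertion. Using \eqref{XNot}--\eqref{XdeltaNot} I split $X(t)-X^{(\delta)}_c(t)=(A-A^{(\delta)})(t)+(B-B^{(\delta)})(t)+(C-C^{(\delta)})(t)$ and bound each summand in $L^p(\Omega)$. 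The recurring tools are the moment inequality for stochastic integrals of Lemma~\ref{BDGaKunita} (i.e.\ \eqref{41eq5}), the parametric Meyer--It\^o formula \eqref{MeyerItoFormula}--\eqref{LFormulas} together with the estimates of Lemma~\ref{MEstL}, the bounds \eqref{MomEstSol}, \eqref{MomEstIncSol}, \eqref{MomEst}, a stochastic Fubini theorem, and Assumption~\ref{Ass}. Below, for $s\in(t_i,t_{i+1}]$ I abbreviate $i=i(s)$.

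\textbf{Drift term.} For $s\in(t_i,t_{i+1}]$ I write $\mu(s,X(s))-\mu(\xi_i,X^{(\delta)}(t_i))=\big[\mu(s,X(s))-\mu(s,X(t_i))\big]+\big[\mu(s,X(t_i))-\mu(s,X^{(\delta)}_c(t_i))\big]+\big[\mu(s,X^{(\delta)}(t_i))-\mu(\xi_i,X^{(\delta)}(t_i))\big]$. The middle bracket is $\le K_1|X(t_i)-X^{(\delta)}_c(t_i)|$ by \eqref{AssLip}, so after integration in $s$ it feeds the Gronwall integral. To the first bracket I apply the parametric Meyer--It\^o formula \eqref{MeyerItoFormula} to $\mu$ with parameter $s$ on $[t_i,s]$; integrating the resulting $\diff u$-, $\diff W(u)$- and $\diff N(u)$-integrals over $s\in(t_i,t_{i+1}]$, summing over $i$, and moving the outer $\diff s$-integral inside by Fubini gives, via the $L^p$-bounds of Lemma~\ref{MEstL} and \eqref{41eq5}, a contribution of order $\delta$. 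For the last bracket, integration in $s$ yields $-\sum_i Z_i$ with $Z_i:=\int_{t_i}^{t_{i+1}}\!\big(\mu(\xi_i,X^{(\delta)}(t_i))-\mu(s,X^{(\delta)}(t_i))\big)\,\diff s$; since $\xi_i$ is uniform on $[t_i,t_{i+1}]$ and independent of $\bar{\mathcal F}^n_{t_i}$, one has $\E[Z_i\mid\bar{\mathcal F}^n_{t_i}]=0$, so $(\sum_{i<k}Z_i)_k$ is an $(\bar{\mathcal F}^n_{t_k})$-martingale. The discrete Burkholder--Davis--Gundy (Marcinkiewicz--Zygmund) inequality, together with $|Z_i|\le K_1(1+|X^{(\delta)}(t_i)|)\delta^{1+\varrho_1}$, \eqref{MomEst}, and $\delta=T/n$, gives $\|\sum_i Z_i\|_{L^p(\Omega)}\lesssim\delta^{\varrho_1+1/p}$ (in fact $\lesssim\delta^{\varrho_1+1/2}$). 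This is the step where time-randomization of the drift enters.

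\textbf{Diffusion and jump terms.} By \eqref{41eq5} applied to the $\diff W$-, resp.\ $\diff N$-integral, $\|(B-B^{(\delta)})(t)\|_{L^p(\Omega)}^p\lesssim\int_0^t\E[|\Theta_B(s)|^p]\,\diff s$ and $\|(C-C^{(\delta)})(t)\|_{L^p(\Omega)}^p\lesssim\int_0^t\E[|\Theta_C(s)|^p]\,\diff s$, where for $s\in(t_i,t_{i+1}]$ the diffusion integrand is $\Theta_B(s)=\sigma(s,X(s))-\sigma(U_i)-\int_{t_i}^s L_1\sigma(U_i)\,\diff W(u)-\int_{t_i}^s L_{-1}\sigma(U_i)\,\diff N(u)$ and $\Theta_C(s)$ is its analogue with $\rho$ replacing $\sigma$ (and $\int_{t_i}^{s-}$ in the last integral; as remarked in Section~3 this changes nothing under $\E[\,\cdot\,]$). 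For $f\in\{\sigma,\rho\}$ I decompose $f(s,X(s))-f(U_i)=\big[f(s,X(s))-f(t_i,X(s))\big]+\big[f(t_i,X(t_i))-f(t_i,X^{(\delta)}(t_i))\big]+\big[f(t_i,X(s))-f(t_i,X(t_i))\big]$: the first bracket, estimated by the time-Hölder continuity in Assumption~\ref{Ass}\ref{Ass2}, contributes $\delta^{\varrho_2}$ for $B$ and $\delta^{\varrho_3}$ for $C$; the second is $\le K_1|X(t_i)-X^{(\delta)}_c(t_i)|$, feeding the Gronwall integral; to the third I apply the parametric Meyer--It\^o formula with parameter $t_i$, getting $\int_{t_i}^s\alpha(f,t_i,u)\,\diff u+\int_{t_i}^s\beta(f,t_i,u)\,\diff W(u)+\int_{t_i}^s\gamma(f,t_i,u)\,\diff N(u)$, which I match term-by-term with the Milstein corrections, comparing $\beta(f,t_i,u)$ with $L_1 f(U_i)$ and $\gamma(f,t_i,u)$ with $L_{-1}f(U_i)$. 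Each such difference splits into a time-oscillation of $\sigma$ or $\rho$ (order $\delta^{\varrho_2}$ resp.\ $\delta^{\varrho_3}$) and a spatial difference controlled by \eqref{AssLipL1} (Lipschitz continuity of $L_1 f$, and the induced Lipschitz continuity of $L_{-1}\sigma$, $L_{-1}\rho$) and \eqref{MomEstIncSol} (so $\|X(u-)-X(t_i)\|_{L^p}\lesssim\delta^{1/p}$), plus a Gronwall piece; the $\diff u$-integral is $O(\delta)$ by Lemma~\ref{MEstL}. Carrying these through the remaining $\int_{t_i}^s\cdot\,\diff W(u)$, resp.\ $\int_{t_i}^s\cdot\,\diff N(u)$, produces residuals of orders $\delta$, $\delta^{1/2+\varrho_2}$, $\delta^{1/2+1/p}$, $\delta^{1/p+\varrho_3}$ and $\delta^{2/p}$ (plus Gronwall pieces); the slow rate $\delta^{2/p}$ arises precisely because the $\delta^{1/p}$ increment bound \eqref{MomEstIncSol} passes through one further $\diff N$-integration, costing another $\delta^{1/p}$. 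As each exponent is $\ge r$, this gives $\|\Theta_B(s)\|_{L^p(\Omega)}+\|\Theta_C(s)\|_{L^p(\Omega)}\lesssim\delta^{r}+e_p(s)^{1/p}$, hence $\|(B-B^{(\delta)})(t)\|_{L^p}^p+\|(C-C^{(\delta)})(t)\|_{L^p}^p\lesssim\delta^{pr}+\int_0^t e_p(s)\,\diff s$.

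\textbf{Conclusion and main difficulty.} Combining the three estimates and taking the supremum over $t'\le t$ gives $e_p(t)\lesssim\delta^{pr}+\int_0^t e_p(s)\,\diff s$; Gronwall's lemma (applicable since $e_p$ is finite and measurable) completes the proof. The moment bookkeeping and the Gronwall closure are routine; the genuinely delicate step is the jump term, where one must (i) match the Milstein corrections $I_{t_i,t}(N,N)$, $I_{t_i,t}(N,W)$, $I_{t_i,t}(W,N)$ exactly with the $\diff N(u)$- and mixed integrals produced by the Meyer--It\^o expansions of $\rho$ and $\sigma$, and (ii) keep precise track of how many $\diff N$-integrations each residual traverses, since the $p$-dependence of the rate---the term $2/p$, corresponding to two Poisson jumps inside one mesh interval---is exactly a consequence of this count and of using the sharp $\delta^{1/p}$ (rather than $\delta^{1/2}$) increment bound \eqref{MomEstIncSol}. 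A secondary point, which dictates the order of all the decompositions above, is that $\sigma'_y$ and $\rho'_y$ are not assumed Hölder in time, so only $\sigma$, $\rho$, $\mu$, $\mu'_y$, $L_1\sigma$, $L_1\rho$---all of which we control---may ever appear carrying a time increment.
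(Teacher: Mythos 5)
Your overall architecture coincides with the paper's: the same splitting of $X-X^{(\delta)}_c$ into the $A$-, $B$-, and $C$-parts, the same three-way decomposition of each integrand (time oscillation, Lipschitz/Gronwall piece, Meyer--It\^o expansion matched against the Milstein corrections), and your treatment of the diffusion and jump terms reproduces the paper's estimates \eqref{Meq10a}, \eqref{Meq13a}, \eqref{Meq13b}, \eqref{Meq13c} with the correct exponents, including the correct source of the $\frac2p$ term. The Gronwall closure is also as in the paper.

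There is, however, a genuine gap in the drift term, precisely in the piece you dismiss as ``a contribution of order $\delta$''. After applying the Meyer--It\^o formula to $\mu(s,X(s))-\mu(s,X(t_i))$, the resulting integrands $\alpha(\mu,s,u)$, $\beta(\mu,s,u)$, $\gamma(\mu,s,u)$ carry the outer integration variable $s$ as a parameter, so the Fubini swap you invoke is not directly available, and a naive application of \eqref{41eq5} followed by the triangle inequality over the $n$ subintervals gives only $O(\delta^{1/2})$ for the $\diff W$-part and only $O(\delta^{1/p})$ for the $\diff N$-part --- both insufficient. The paper's resolution (see \eqref{mainM3p1}--\eqref{est_tm3}) is to freeze the parameter at $t_i$, paying a time-H\"older increment of $\beta(\mu,\cdot,u)$ and $\gamma(\mu,\cdot,u)$ --- this is exactly where Assumption \ref{Ass} \ref{Ass3} on $\mu'_y$ enters, which your proof never uses --- and then to compensate the Poisson integral and run a discrete martingale/Burkholder--Davis--Gundy argument across the subintervals. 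The outcome is $\delta^{\min\{\varrho_1+\frac1p,\frac12+\frac1p,1\}}$, not $\delta$: the exponent $\varrho_1+\frac1p$ in the statement of the theorem originates here, from the difference $\gamma(\mu,s,u)-\gamma(\mu,t_i,u)$ passing through a single $\diff N$-integration as in \eqref{mainM3p2}, and not, as you assert, from the randomization term, which contributes the strictly better rate $\varrho_1+\frac12$. As written, your accounting would yield the stronger rate $\min\{\frac2p,\varrho_1+\frac12,\varrho_2,\varrho_3\}$, which the tools you cite cannot deliver. A minor additional slip: $\xi_i$ is not independent of $\bar{\mathcal{F}}^n_{t_i}$, since that $\sigma$-field contains $\sigma(\xi_0,\dots,\xi_{n-1})$ by construction; the martingale property of $\sum_i Z_i$ must be formulated with respect to a filtration that incorporates $\xi_0,\dots,\xi_{i-1}$ but not $\xi_i$.
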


\begin{proof}
For all $t\in[0,T]$ it holds that 
\begin{equation}
    \begin{aligned}\label{Meq1}
    X(t)- X^{(\delta)}_c(t) = \big(A(t) -A^{(\delta)}(t) \big)+ \big(B(t) -B^{(\delta)}(t) \big) + \big(C(t) -C^{(\delta)}(t) \big).
    \end{aligned}
\end{equation}
We begin by rewriting each summand of the right hand side of equation \eqref{Meq1}. We obtain
\begin{equation}
    \begin{aligned}\label{Meq3}
    A(t) -A^{(\delta)}(t)
    &=\tilde A^{(\delta)}_1(t)+\tilde A^{(\delta)}_2(t)+\tilde A^{(\delta)}_3(t),
    \end{aligned}
\end{equation}
where 
\begin{equation}
    \begin{aligned}\label{Meq3a}
    \tilde A^{(\delta)}_1(t)&=\int\limits_0^t \sum_{i=0}^{n-1} \big( \mu(s,X(s))- \mu(s,X(t_{i})) \big) \mathds{1}_{(t_i,t_{i+1}]}(s) \diff s,\\
    \tilde A^{(\delta)}_2(t)&=\int\limits_0^t \sum_{i=0}^{n-1} \big( \mu(s,X(t_i))- \mu(\xi_i,X(t_{i})) \big) \mathds{1}_{(t_i,t_{i+1}]}(s) \diff s,\\
    \tilde A^{(\delta)}_3(t)&= \int\limits_0^t \sum_{i=0}^{n-1}\big(\mu(\xi_i,X(t_{i}))- \mu(\xi_i,X^{(\delta)}_c(t_{i})) \big) \mathds{1}_{(t_i,t_{i+1}]}(s) \diff s.
    \end{aligned}
\end{equation}
We apply the parametric version of the Meyer-It\^o formula \eqref{MeyerItoFormula} to obtain
\begin{equation}
    \mu(s,X(s))-\mu(s,X(t_i))=\int\limits_{t_i}^s \alpha(\mu,s,u) \diff u +\int\limits_{t_i}^s \beta(\mu,s,u)\diff 
    W(u) +\int\limits_{t_i}^s\gamma(\mu,s,u)\diff N(u).
\end{equation}
Hence,
\begin{equation}\label{Meq4}
    \tilde A_1^{(\delta)}(t) = \sum\limits_{j=1}^3 \tilde M_j^{(\delta)}(t),
\end{equation}
where
\begin{equation}
    \begin{aligned}
    &\tilde M_1^{(\delta)}(t)=\int\limits_0^t\sum\limits_{i=0}^{n-1}\Bigg(\int\limits_{t_i}^s\alpha(\mu,s,u)\diff u\Bigg)\cdot\mathds{1}_{(t_i,t_{i+1}]}(s)\diff s,\\
    &\tilde M_2^{(\delta)}(t)=\int\limits_0^t\sum\limits_{i=0}^{n-1}\Bigg(\int\limits_{t_i}^s\beta(\mu,s,u)\diff W(u)\Bigg)\cdot\mathds{1}_{(t_i,t_{i+1}]}(s)\diff s,\\
    &\tilde M_3^{(\delta)}(t)=\int\limits_0^t\sum\limits_{i=0}^{n-1}\Biggl(\int\limits_{t_i}^s\gamma(\mu,s,u)\diff N(u)\Biggr)\cdot\mathds{1}_{(t_i,t_{i+1}]}(s)\diff s.
    \end{aligned}
\end{equation}
Furher, we obtain for the second summand of \eqref{Meq1},
\begin{equation}
    \begin{aligned}\label{Meq5}
    B(t) -B^{(\delta)}(t)
    &= \int\limits_0^t \sum_{i=0}^{n-1} \big(\sigma(s,X(s)) - \sigma(t_i,X(s))\big) \mathds{1}_{(t_i,t_{i+1}]}(s) \diff W(s) \\
    &\quad + \int\limits_0^t \sum_{i=0}^{n-1} \Bigg(\sigma(t_i,X(s))- \sigma(t_i,X(t_i))\\
    &\quad\quad\quad\quad\quad\quad-\int\limits_{t_i}^s L_1 \sigma(U_i)\diff W(u)
    - \int\limits_{t_i}^s L_{-1} \sigma(U_i)\diff N(u)\Bigg)\mathds{1}_{(t_i,t_{i+1}]}(s) \diff W(s)\\
    &\quad + \int\limits_0^t \sum_{i=0}^{n-1} \big(\sigma(t_i,X(t_i)) - \sigma(U_i)\big) \mathds{1}_{(t_i,t_{i+1}]}(s) \diff W(s).
    \end{aligned}
\end{equation}
Again we obtain by using the parametric version of the Meyer-It\^o formula  \eqref{MeyerItoFormula} that
\begin{equation}
    \begin{aligned}
    &\sigma(t_i,X(s))- \sigma(t_i,X(t_i))-\int\limits_{t_i}^s L_1 \sigma(U_i)\diff W(u) 
    - \int\limits_{t_i}^s L_{-1} \sigma(U_i)\diff N(u)\\
    &=\int\limits_{t_i}^s \alpha(\sigma,t_i,u)\diff u+\int\limits_{t_i}^s \Bigl(\beta(\sigma,t_i,u)-L_1 \sigma(U_i)\Bigr)\diff W(u)
    +\int\limits_{t_i}^s \Bigl(\gamma(\sigma,t_i,u)-L_{-1} \sigma(U_i)\Bigr)\diff N(u).
    \end{aligned}
\end{equation}
Hence, 
\begin{equation}
    \begin{aligned}\label{Meq6}
    B(t) -B^{(\delta)}(t)&= \int\limits_0^t \sum_{i=0}^{n-1} \Bigg( \int\limits\limits_{t_i}^s \alpha(\sigma,t_i,u)\diff u\Bigg) \mathds{1}_{(t_i,t_{i+1}]}(s) \diff W(s)\\
    &\quad + \int\limits_0^t \sum_{i=0}^{n-1}\Biggl( \int\limits\limits_{t_i}^s \Bigl(\beta(\sigma,t_i,u)-L_1 \sigma(U_i)\Bigr)\diff W(u)\Biggr) \mathds{1}_{(t_i,t_{i+1}]}(s) \diff W(s)\\
    &\quad + \int\limits_0^t \sum_{i=0}^{n-1} \Biggl(\int\limits_{t_i}^s \Bigl(\gamma(\sigma,t_i,u)-L_{-1} \sigma(U_i)\Bigr)\diff N(u) \Biggr) \mathds{1}_{(t_i,t_{i+1}]}(s) \diff W(s) \\
    &\quad +\int\limits_0^t \sum_{i=0}^{n-1} \big(\sigma(s,X(s)) - \sigma(t_i,X(s))\big) \mathds{1}_{(t_i,t_{i+1}]}(s) \diff W(s) \\
    &\quad + \int\limits_0^t \sum_{i=0}^{n-1} \big(\sigma(t_i,X(t_i)) - \sigma(U_i)\big) \mathds{1}_{(t_i,t_{i+1}]}(s) \diff W(s).
    \end{aligned}
\end{equation}
For the third summand of \eqref{Meq1} it holds that
\begin{equation}
    \begin{aligned}\label{Meq7}
    C(t) -C^{(\delta)}(t)
    &= \int\limits_0^t \sum_{i=0}^{n-1} \big(\rho(s,X(s-)) - \rho(t_i,X(s-))\big) \mathds{1}_{(t_i,t_{i+1}]}(s) \diff N(s) \\
    &\quad + \int\limits_0^t \sum_{i=0}^{n-1} \Bigg(\rho(t_i,X(s-))- \rho(t_i,X(t_i))\\
    &\quad\quad\quad\quad\quad\quad-\int\limits_{t_i}^{s} L_1 \rho(U_i)\diff W(u)
    - \int\limits_{t_i}^{s-} L_{-1} \sigma(U_i)\diff N(u)\Bigg)\mathds{1}_{(t_i,t_{i+1}]}(s) \diff N(s)\\
    &\quad + \int\limits_0^t \sum_{i=0}^{n-1} \big(\rho(t_i,X(t_i)) - \rho(U_i)\big) \mathds{1}_{(t_i,t_{i+1}]}(s) \diff N(s).
    \end{aligned}
\end{equation}
Using the parametric version of the Meyer-It\^o formula \eqref{MeyerItoFormula} we obtain
\begin{equation}
    \begin{aligned}
    &\rho(t_i,X(s-))- \rho(t_i,X(t_i))-\int\limits_{t_i}^s L_1 \rho(U_i)\diff W(u)
    - \int\limits_{t_i}^{s-} L_{-1} \rho(U_i)\diff N(u)\\
    &=\int\limits_{t_i}^s \alpha(\rho,t_i,u)\diff u+\int\limits_{t_i}^s \Bigl(\beta(\rho,t_i,u)-L_1 \rho(U_i)\Bigr)\diff W(u)
    +\int\limits_{t_i}^{s-} \Bigl(\gamma(\rho,t_i,u)-L_{-1} \rho(U_i)\Bigr)\diff N(u).
    \end{aligned}
\end{equation}
Due to the continuity of the processes it holds that
\begin{equation}
   \int\limits_{t_i}^s \alpha(\rho,t_i,u)\diff u=\int\limits_{t_i}^{s-} \alpha(\rho,t_i,u)\diff u, \quad \int\limits_{t_i}^s\beta(\rho,t_i,u)\diff W(u)=\int\limits_{t_i}^{s-}\beta(\rho,t_i,u)\diff W(u).
\end{equation}
Consequently, for all $t\in [0,T]$ holds
\begin{equation}
    \begin{aligned}\label{Meq8}
    C(t) -C^{(\delta)}(t)&= \int\limits_0^t \sum_{i=0}^{n-1} \Bigg( \int\limits_{t_i}^s \alpha(\rho,t_i,u)\diff u\Bigg) \mathds{1}_{(t_i,t_{i+1}]}(s) \diff N(s)\\
    &\quad + \int\limits_0^t \sum_{i=0}^{n-1}\Biggl( \int\limits_{t_i}^s \Bigl(\beta(\rho,t_i,u)-L_1 \rho(U_i)\Bigr)\diff W(u)\Biggr) \mathds{1}_{(t_i,t_{i+1}]}(s) \diff N(s)\\
    &\quad + \int\limits_0^t \sum_{i=0}^{n-1} \Biggl(\int\limits_{t_i}^{s-} \Bigl(\gamma(\rho,t_i,u)-L_{-1} \rho(U_i)\Bigr)\diff N(u) \Biggr) \mathds{1}_{(t_i,t_{i+1}]}(s) \diff N(s) \\
    &\quad +\int\limits_0^t \sum_{i=0}^{n-1} \big(\rho(s,X(s-)) - \rho(t_i,X(s-))\big) \mathds{1}_{(t_i,t_{i+1}]}(s) \diff N(s) \\
    &\quad + \int\limits_0^t \sum_{i=0}^{n-1} \big(\rho(t_i,X(t_i)) - \rho(U_i)\big) \mathds{1}_{(t_i,t_{i+1}]}(s) \diff N(s). 
    \end{aligned}
\end{equation}

We now estimate all terms in \eqref{Meq3}, \eqref{Meq4}, \eqref{Meq6}, and \eqref{Meq8}.  
We use Lemma \ref{BDGaKunita} and Assumption \ref{Ass} \ref{Ass1} for $(f,v,Z)\in\{(\mu,\xi_i,\operatorname{Id}),(\sigma,t_i, W), (\rho, t_i, N)\}$. This implies the existence of a constant $c_1\in(0,\infty)$ such that
\begin{equation}
    \begin{aligned}\label{Meq10}
    &\E\Bigg[\Bigg|\int\limits_0^t \sum_{i=0}^{n-1} \big(f(v,X(t_i))- f(v,X^{(\delta)}_c(t_i)) \big) \mathds{1}_{(t_i, t_{i+1}]}(s) \diff Z(s) \Bigg|^p\Bigg]\\
    &\leq \hat c\, \E\Bigg[\int\limits_0^t \sum_{i=0}^{n-1} \big| f(v,X(t_i))- f(v,X^{(\delta)}_c(t_i)) \big|^p \mathds{1}_{(t_i, t_{i+1}]}(s) \diff s\Bigg]\\
    &\leq c_1 \int\limits_0^t \sum_{i=0}^{n-1} \E\Big[\big| X(t_i)- X^{(\delta)}_c(t_i) \big|^p\Big] \mathds{1}_{(t_i, t_{i+1}]}(s) \diff s.
    \end{aligned}
\end{equation}
Further, using Lemma \ref{BDGaKunita} and \eqref{AssHold} we obtain the existence of a constant $c_2\in(0,\infty)$ such that for all $(f,Z)\in\{(\sigma,W),(\rho,N)\}$ and for all $t\in [0,T]$ holds 
\begin{equation}
    \begin{aligned}\label{Meq10a}
    &\mathbb{E}\Bigg[\Bigg|\int\limits_0^t\sum\limits_{i=0}^{n-1}\Bigl(f(s,X(s-))-f(t_i,X(s-))\Bigr)\mathds{1}_{(t_i,t_{i+1}]}(s)\diff Z(s)\Bigg|^p\Bigg]\\
    &\leq \hat c \sum\limits_{i=0}^{n-1}\mathbb{E}\Bigg[\int\limits_{t_i}^{t_{i+1}}|f(s,X(s-))-f(t_i,X(s-))|^p\diff s\Bigg]
    \\
    &\leq \hat c K_1^p \sum\limits_{i=0}^{n-1}\mathbb{E}\Bigg[\int\limits_{t_i}^{t_{i+1}} (1+|X(s-)|)^p\cdot (s-t_i)^{p\varrho_f}\diff s\Bigg]\\
    &\leq \hat c K_1^p \delta^{p\varrho_f}\sum\limits_{i=0}^{n-1}\mathbb{E}\Bigg[\int\limits_{t_i}^{t_{i+1}} (1+|X(s)|)^p\diff s\Bigg]\leq 2^{p-1}\hat cK_1^p \delta^{p\varrho_f}\Bigl(1+\mathbb{E}\Big[\sup\limits_{0\leq t\leq T}|X(t)|^p\Big]\Bigr)\leq c_2\delta^{p\varrho_f}.
    \end{aligned}
\end{equation}
Using Lemma \ref{BDGaKunita} we get that there exist constants $c_3,c_4\in(0,\infty)$ such that for $(f,v,Z)\in\{(\mu,s,\operatorname{Id}),(\sigma,t_i, W), (\rho,t_i, N)\}$ holds
\begin{equation}
\begin{aligned}
\label{Meq11}
&\E\Bigg[\Bigg|\int\limits_0^t \sum_{i=0}^{n-1} \Bigg( \int\limits_{t_i}^s \alpha(f,v,u) \diff u \Bigg) \mathds{1}_{(t_i, t_{i+1}]}(s) \diff Z(s) \Bigg|^p\Bigg]\\
&\leq c_3\,\sum_{i=0}^{n-1}\int\limits_{t_i}^{t_{i+1}}\E\Bigg[ \Bigg(\int\limits_{t_i}^{s} |\alpha_1(f,v,u)| \diff u \Bigg)^p\Bigg]\diff s+c_4\, \sum_{i=0}^{n-1}\int\limits_{t_i}^{t_{i+1}}\E\Bigg[ \Bigg(\int\limits_{t_i}^{s} |\alpha_2(f,v,u)| \diff u \Bigg)^p\Bigg]\diff s.
\end{aligned}
\end{equation}
The expectations in equation \eqref{Meq11} are then estimated one by one. 
For the first term, we utilise \eqref{LinGrowth}, \eqref{BddDer}, and \eqref{MomEstSol} to obtain the existence of a constant $c_5\in(0,\infty)$ such that for all $s\in [t_i,t_{i+1}]$, $v\in\{s,t_i\}$ holds
\begin{equation}
\begin{aligned}\label{Meq12}
&\E\Bigg[ \Bigg(\int\limits_{t_i}^{s}  |\alpha_1(f,v,u)| \diff u \Bigg)^p\Bigg]
= \E\Bigg[\Bigg(\int\limits_{t_i}^{s} \Big|\frac{\partial f}{\partial y}(v,X(u))\Big|\cdot \big| \mu(u,X(u))\big| \diff u \Bigg)^p\Bigg]\\
&\leq (K_1 K_4)^p\, \E\Bigg[\Bigg(\int\limits_{t_i}^{s} \big( 1 + |X(u)|\big) \diff u \Bigg)^p\Bigg]
\leq (K_1 K_4)^p\delta^p\, \E\Big[\big( 1 + \sup_{0\leq t \leq T} |X(t)|\big)^p\Big]
\leq c_5\, \delta^p.
\end{aligned}
\end{equation}
For the second term we use \eqref{LinGrowth}, the fact that $|\frac{\partial^2 f}{\partial y^2}(t,y)| \leq K_1$ for all $t\in[0,T]$ and $y\in\R$ by \eqref{BddSecDer}, and \eqref{MomEstSol} to obtain that there exist constants $c_6,c_7,c_{8}\in(0,\infty)$ such that for all $s\in [t_i,t_{i+1}]$, $v\in \{s,t_i\}$,
\begin{equation}
\begin{aligned}\label{Meq13}
&\E\Bigg[ \Bigg(\int\limits_{t_i}^{s} |\alpha_2(f,v,u)| \diff u \Bigg)^p\Bigg]
= \E\Bigg[\Bigg(\frac{1}{2} \int\limits_{t_i}^{s} \Big|\frac{\partial^2 f}{\partial y^2}(v,X(u))\Big|\cdot \big| \sigma^2(u,X(u))\big| \diff u \Bigg)^p\Bigg]\\
&\leq \Big(\frac{K_1}{2}\Big)^p \E\Bigg[\Bigg( \int\limits_{t_i}^{s} \sigma^2(u,X(u)) \diff u \Bigg)^p\Bigg]
\leq c_6\, \E\Bigg[\Bigg( \int\limits_{t_i}^{t_{i+1}} \big( 1 + |X(u)|^2\big) \diff u \Bigg)^p\Bigg]\\
&\leq c_7\, \delta^p\, \E\Big[ \big( 1 + \sup_{0\leq t\leq T}|X(t)|^2\big)^p\Big]
\leq c_{8}\, \delta^p.
\end{aligned}
\end{equation}
As a result of combining equations \eqref{Meq11}, \eqref{Meq12}, and \eqref{Meq13}, we conclude that there exists a constant $c_{9}\in(0,\infty)$ such that
\begin{equation}
\begin{aligned}\label{Meq13a}
&\E\Bigg[\Bigg|\int\limits_0^t \sum_{i=0}^{n-1} \Bigg( \int\limits_{t_i}^s \alpha(f,v,u) \diff u \Bigg) \mathds{1}_{(t_i, t_{i+1}]}(s) \diff Z(s) \Bigg|^p\Bigg]
\leq c_{9}\,  \delta^p.
\end{aligned}
\end{equation}
For all $(f,Z)\in \{(\sigma,W),(\rho,N)\}$ and all $t\in [0,T]$ we get
\begin{equation}
\begin{aligned}
&\mathbb{E}\Bigg[\Bigg|\int\limits_0^t\sum\limits_{i=0}^{n-1}\Biggl(\int\limits_{t_i}^s\Bigl(\beta(f,t_i,u)-L_1f(U_i)\Bigr)\diff W(u)\Biggr)\mathds{1}_{(t_i,t_{i+1}]}(s)\diff Z(s)\Bigg|^p\Bigg]\\
&\leq \hat c\int\limits_0^t\sum\limits_{i=0}^{n-1}\mathbb{E}\Bigg[\Bigg|\int\limits_{t_i}^s\Bigl(\beta(f,t_i,u)-L_1f(U_i)\Bigr)\diff W(u)\Bigg|^p\Bigg]\cdot\mathds{1}_{(t_i,t_{i+1}]}(s)\diff s.
\end{aligned}
\end{equation}
Further, there exists a constant $c_{10}\in(0,\infty)$ such that for all $s\in [t_i,t_{i+1}]$,
\begin{equation}
 \mathbb{E}\Bigg[\Bigg|\int\limits_{t_i}^s\Bigl(\beta(f,t_i,u)-L_1f(U_i)\Bigr)\diff W(u)\Bigg|^p\Bigg]
 \leq c_{10}(s-t_i)^{\frac{p}{2}-1}\cdot\mathbb{E}\Bigg[\int\limits_{t_i}^s\big|\beta(f,t_i,u)-L_1f(U_i)\big|^p\diff s\Bigg].
\end{equation}
In addition, for $u\in [t_i,t_{i+1}]$,
\begin{equation}
\begin{aligned}
 &|\beta(f,t_i,u)-L_1f(U_i)|\leq |\beta(f,t_i,u)-L_1f(t_i,X(u))|+|L_1f(t_i,X(u))-L_1f(t_i,X^{(\delta)}(t_i))|\\
 &\leq K_3|X(u)-X(t_i)|+K_3|X(t_i)-X^{(\delta)}(t_i)|+K_1^2(1+|X(u)|)\cdot |u-t_i|^{\varrho_2}.
\end{aligned}
\end{equation}
Using \eqref{MomEstSol} and \eqref{MomEstIncSol} we conclude the existence of constants $c_{11},c_{12},c_{13}\in(0,\infty)$ such that
\begin{equation}
 \mathbb{E}\big[|\beta(f,t_i,u)-L_1f(U_i)|^p\Big]
 \leq c_{11} (u-t_i)+c_{12} (u-t_i)^{p\varrho_2}+c_{13}\mathbb{E}\big[  |X(t_i)-X^{(\delta)}(t_i)|^p\big]. 
\end{equation}
Hence, there exist constants $c_{14},c_{15},c_{16}\in(0,\infty)$ such that
\begin{equation}
\begin{aligned}\label{Meq13b}
 &\mathbb{E}\Bigg[\Bigg|\int\limits_0^t\sum\limits_{i=0}^{n-1}\Biggl(\int\limits_{t_i}^s\Bigl(\beta(f,t_i,u)-L_1f(U_i)\Bigr)\diff W(u)\Biggr)\mathds{1}_{(t_i,t_{i+1}]}(s)\diff Z(s)\Bigg|^p\Bigg]\\
&\leq c_{14}\delta^{\frac{p}{2}+1}+c_{15}\delta^{p(\varrho_2+\frac{1}{2})}+c_{16}\int\limits_0^t\sum\limits_{i=0}^{n-1}\mathbb{E}\Big[\big|X(t_i)-X^{(\delta)}(t_i)\big|^p\Big] \cdot\mathds{1}_{(t_i,t_{i+1}]}(s)\diff s.
\end{aligned}
\end{equation}
Further, for $(f,Z)\in \{(\sigma,W),(\rho,N)\}$ and $t\in [0,T]$  we obtain, using Lemma \ref{BDGaKunita} and the fact that the intervals $(t_i,t_{i+1}]$ are disjoint for different $i\in \{0,\ldots, n-1\}$, that
\begin{equation}
\begin{aligned}
 &\mathbb{E}\Bigg[\Bigg|\int\limits_0^t \sum_{i=0}^{n-1} \Biggl(\int\limits_{t_i}^{s-} \Bigl(\gamma(f,t_i,u)-L_{-1} f(U_i)\Bigr)\diff N(u) \Biggr) \mathds{1}_{(t_i,t_{i+1}]}(s) \diff Z(s)\Bigg|^p\Bigg]\\
 &\leq  \hat c\,\mathbb{E}\Bigg[\int\limits_0^t\sum\limits_{i=0}^{n-1}\Bigg|\int\limits_{t_i}^{s-}\Bigl(\gamma(f,t_i,u)-L_{-1}f(U_i)\Bigr)\diff N(u)\Bigg|^p\cdot\mathds{1}_{(t_i,t_{i+1}]}(s)\diff s\Bigg].
\end{aligned}
\end{equation}
Since $N$ has at most finitely many jumps on $[0,T]$, we can replace $s-$ by $s$ in the upper integration limit without affecting the value of the outer integral. Then we apply Lemma \ref{BDGaKunita} to obtain
\begin{equation}
\begin{aligned}\label{Meq13aa}
 &\mathbb{E}\Bigg[\Bigg|\int\limits_0^t \sum_{i=0}^{n-1} \Biggl(\int\limits_{t_i}^{s-} \Bigl(\gamma(f,t_i,u)-L_{-1} f(U_i)\Bigr)\diff N(u) \Biggr) \mathds{1}_{(t_i,t_{i+1}]}(s) \diff Z(s)\Bigg|^p\Bigg]\\
 &\leq  \hat c\,\int\limits_0^t\sum\limits_{i=0}^{n-1}\mathbb{E}\int\limits_{t_i}^{s} \big|\gamma(f,t_i,u)-L_{-1}f(U_i)\big|^p\diff u\cdot\mathds{1}_{(t_i,t_{i+1}]}(s)\diff s.
\end{aligned}
\end{equation}
Using the Lipschitz continuity of $f$ in space and the triangle inequality we obtain that
\begin{equation}
\begin{aligned}
 &|\gamma(f,t_i,u)-L_{-1}f(U_i)|
 \leq 2\, K_1 |X(u-) - X^{(\delta)}(t_i)| +K_1 |\rho(u, X(u-)) - \rho(t_i, X^{(\delta)}(t_i)|.
\end{aligned}
\end{equation}
Adding and subtracting $\rho(t_i, X(u-))$, using the Lipschitz continuity of $\rho$ in space, the Hölder continuity of $\rho$ in time, and adding and subtracting $X(t_i)$ we obtain 
\begin{equation}
\begin{aligned}\label{Meq13c}
 &|\gamma(f,t_i,u)-L_{-1}f(U_i)|\\
 &\leq (2\, K_1 + K_1^2) |X(u-) - X(t_i)| +  (2\, K_1 + K_1^2) |X(t_i) - X^{(\delta)}(t_i)|\\
 &\quad+K_1^2 (1+|X(u-)|)|u-t_i|^{\varrho_3}.
\end{aligned}
\end{equation}
Note that $X(u)$ and $X(u-)$ differ only for at most finitely many $u\in[0,T]$. Hence, we conclude from \eqref{Meq13c} that for all $s\in [t_i,t_{i+1}]$
\begin{equation}
\begin{aligned}
    &\int\limits_{t_i}^s|\gamma(f,t_i,u)-L_{-1}f(U_i)|^p \diff u\\
    &\leq  3^{p-1}(2\, K_1 + K_1^2)^p \int\limits_{t_i}^s|X(u-) - X(t_i)|^p\diff u +  \delta\cdot 3^{p-1}(2\, K_1 + K_1^2)^p |X(t_i) - X^{(\delta)}(t_i)|^p\\
 &\quad+3^{p-1}K_1^{2p} \int\limits_{t_i}^s(1+|X(u-)|)^p|u-t_i|^{p\varrho_3}\diff u\\
 &= 3^{p-1}(2\, K_1 + K_1^2)^p \int\limits_{t_i}^s|X(u) - X(t_i)|^p\diff u +  \delta\cdot 3^{p-1}(2\, K_1 + K_1^2)^p |X(t_i) - X^{(\delta)}(t_i)|^p\\
 &\quad+3^{p-1}K_1^{2p} \int\limits_{t_i}^s(1+|X(u)|)^p|u-t_i|^{p\varrho_3}\diff u\\
 &\leq   3^{p-1}(2\, K_1 + K_1^2)^p \int\limits_{t_i}^s|X(u) - X(t_i)|^p\diff u +  T\cdot 3^{p-1}(2\, K_1 + K_1^2)^p |X(t_i) - X^{(\delta)}(t_i)|^p\\
 &\quad+3^{p-1}K_1^{2p} (1+\sup\limits_{v\in [0,T]}|X(v)|)^p\delta^{p\varrho_3+1}.
\end{aligned}    
\end{equation}
Therefore,
\begin{equation}
\begin{aligned}
&\mathbb{E} \int\limits_{t_i}^s\big|\gamma(f,t_i,u)-L_{-1}f(U_i)\big|^p\diff u\\
&\leq 3^{p-1}  (2\, K_1 + K_1^2)^p \,\int\limits_{t_i}^s\mathbb{E}\Big[ \big|X(u) - X(t_i)\big|^p\Big]\diff u +  T\cdot3^{p-1}(2\, K_1 + K_1^2)^p \,\mathbb{E}\Big[ \big|X(t_i) - X^{(\delta)}(t_i)\big|^p\Big]\\
 &\quad+3^{p-1}K_1^{2p}\, \mathbb{E}\Big[ \big(1+\sup_{v\in[0,T]}|X(v)|\big)^p\Big]\delta^{p\varrho_3+1}.
\end{aligned}
\end{equation}
Combining this, \eqref{MomEstSol}, \eqref{MomEstIncSol}, and \eqref{Meq13aa} we obtain that there exist constants $c_{17},c_{18},c_{19}\in(0,\infty)$ such that
\begin{equation}
\begin{aligned}
 &\mathbb{E}\Bigg[\Bigg|\int\limits_0^t \sum_{i=0}^{n-1} \Biggl(\int\limits_{t_i}^{s-} \Bigl(\gamma(f,t_i,u)-L_{-1} f(U_i)\Bigr)\diff N(u) \Biggr) \mathds{1}_{(t_i,t_{i+1}]}(s) \diff Z(s)\Bigg|^p\Bigg]\\
 &\leq  c_{17}\delta^2+c_{18}\delta^{p(\varrho_3+\frac{1}{p})}+c_{19}\int\limits_0^t\sum\limits_{i=0}^{n-1}\mathbb{E}\Big[|X(t_i)-X^{(\delta)}(t_i)|^p\Big]\cdot\mathds{1}_{(t_i,t_{i+1}]}(s)\diff s.
\end{aligned}
\end{equation}

With these estimates we can calculate the randomized Milstein algorithms' error as follows. There exists a constant $c_{20}\in(0,\infty)$ such that
\begin{equation}
    \begin{aligned}\label{part_est_XXd}
    &\E\Big[|X(t)-X^{(\delta)}_c(t)|^p\Big]\\
    &\leq c_{20}\Big( \E\Big[\big|A(t)-A^{(\delta)}(t)\big|^p\Big] + \E\Big[\big|B(t)-B^{(\delta)}(t)\big|^p\Big] + \E\Big[\big|C(t)-C^{(\delta)}(t)\big|^p\Big]\Big).
    \end{aligned}
\end{equation}
Combining \eqref{Meq6} resp.~\eqref{Meq8} with  \eqref{Meq10}, \eqref{Meq10a}, \eqref{Meq13a}, \eqref{Meq13b}, and \eqref{Meq13c}, we obtain the existence of constants $c_{21},c_{22},c_{23},c_{24}\in(0,\infty)$ such that for all $t\in [0,T]$ hold
\begin{equation}\label{diff_est_BBd}
    \mathbb{E}\Big[\big|B(t)-B^{(\delta)}(t)\big|^p\Big]
    \leq c_{21} \delta^{p\min\{\frac{2}{p},\varrho_2,\varrho_3+\frac{1}{p}\}}+c_{22}\int\limits_0^t\sum\limits_{i=0}^{n-1}\mathbb{E}\Big[\big|X(t_i)-X^{(\delta)}(t_i)\big|^p\Big]\cdot\mathds{1}_{(t_i,t_{i+1}]}(s)\diff s,
\end{equation}
\begin{equation}\label{diff_est_CCd}
    \mathbb{E}\Big[\big|C(t)-C^{(\delta)}(t)\big|^p\Big] 
    \leq c_{23} \delta^{p\min\{\frac{2}{p},\varrho_3,\varrho_2+\frac{1}{2}\}}+c_{24}\int\limits_0^t\sum\limits_{i=0}^{n-1}\mathbb{E}\Big[\big|X(t_i)-X^{(\delta)}(t_i)\big|^p\Big]\cdot\mathds{1}_{(t_i,t_{i+1}]}(s)\diff s.
\end{equation}
Some of the terms in \eqref{Meq3} and \eqref{Meq4} still need to be estimated. $\mathbb{E}[|\tilde M_1^{(\delta)}(t)|^p]$ is already considered in \eqref{Meq13a}. 
By following the same procedure as in \cite[pages 8--10]{Morkisz2020} and using Lemma \ref{MEstL}, we obtain that there exists a constant $c_{25}\in(0,\infty)$ such that for all $t\in [0,T]$,
\begin{equation}
  \label{est_tm2}
  \mathbb{E}\Big[\big|\tilde M^{(\delta)}_2(t)\big|^p\Big]\leq c_{25}\delta^{p\min\{\frac{1}{2}+\varrho_1,1\}}.
\end{equation}
For $\mathbb{E}[|\tilde M^{(\delta)}_3(t)|^p]$ there exists a constant $c_{26}\in(0,\infty)$ such that for all $t\in [0,T]$ there exists $\ell\in\{0,1,\ldots,n-1\}$ with $t\in [t_\ell,t_{\ell+1}]$ and
\begin{equation}
\begin{aligned}\label{mainM3p1}
  &\mathbb{E}\Big[\big|\tilde M^{(\delta)}_3(t)\big|^p\Big]
  \leq c_{26}\Bigg( \mathbb{E}\Bigg[\Bigg|\int\limits_0^{t_\ell}\sum\limits_{i=0}^{n-1}\Bigg(\int\limits_{t_i}^s(\gamma(\mu,s,u)-\gamma(\mu,t_i,u))\diff N(u)\Bigg)\cdot\mathds{1}_{(t_i,t_{i+1}]}(s)\diff s\Bigg|^p\Bigg]\\
  &\quad\quad\quad\quad \quad\quad\quad\quad \quad +\mathbb{E}\Bigg[\Bigg|\int\limits_0^{t_\ell}\sum\limits_{i=0}^{n-1}\Bigg(\int\limits_{t_i}^s\gamma(\mu,t_i,u)\diff \tilde N(u)\Bigg)\cdot\mathds{1}_{(t_i,t_{i+1}]}(s)\diff s\Bigg|^p\Bigg]\\
  &\quad\quad\quad\quad \quad\quad\quad\quad \quad+\lambda^p\mathbb{E}\Bigg[\Bigg|\int\limits_0^{t_\ell}\sum\limits_{i=0}^{n-1}\Bigg(\int\limits_{t_i}^s\gamma(\mu,t_i,u)\diff u\bigg)\cdot\mathds{1}_{(t_i,t_{i+1}]}(s)\diff s\Bigg|^p\Bigg]\\
  &\quad\quad\quad\quad \quad\quad\quad\quad \quad+\mathbb{E}\Bigg[\Bigg|\int\limits_{t_\ell}^t\Bigg(\int\limits_{t_\ell}^s\gamma(\mu,s,u)\diff  N(u)\Bigg)\diff s\Bigg|^p\Bigg]\Big).\\
\end{aligned}
\end{equation}
By Lemmas \ref{BDGaKunita} and \ref{MEstL} we get the existence a constant $c_{27}\in(0,\infty)$ such that
\begin{equation}
\begin{aligned}\label{mainM3p2}
 & \mathbb{E}\Bigg[\Bigg|\int\limits_0^{t_\ell}\sum\limits_{i=0}^{n-1}\Bigg(\int\limits_{t_i}^s(\gamma(\mu,s,u)-\gamma(\mu,t_i,u))\diff N(u)\Bigg)\cdot\mathds{1}_{(t_i,t_{i+1}]}(s)\diff s\Bigg|^p\Bigg]\\
 &\leq \hat c \sum\limits_{i=0}^{n-1}\int\limits_{t_i}^{t_{i+1}}\mathbb{E}\Bigg[\int\limits_{t_i}^s|\gamma(\mu,s,u)-\gamma(\mu,t_i,u)|^p\diff u\Bigg]\diff s\\
 &\leq \hat c K_7\sum\limits_{i=0}^{n-1}\int\limits_{t_i}^{t_{i+1}}\mathbb{E}\Bigg[\int\limits_{t_i}^s(1+|X(u-)|)^p\cdot(s-t_i)^{p\varrho_1}\diff u\Bigg]\diff s\leq c_{27}\delta^{p(\varrho_1+\frac{1}{p})}.
\end{aligned}
\end{equation}
The H\"older inequality, \eqref{MomEstSol}, and Lemma \ref{MEstL} are used in a similar manner as above to obtain that there exists a constant $c_{28}\in(0,\infty)$ such that for all $t\in[t_\ell,t_{\ell+1}]$,
\begin{equation}
\begin{aligned}\label{mainM3p3}
&\E\Bigg[\Bigg|\int\limits_{t_\ell}^{t}  \Bigg( \int\limits_{t_\ell}^s  \gamma(\mu,s,u)\diff  N(u)\Bigg) \diff s\Bigg|^p\Bigg] 
\leq \E\Bigg[ \Bigg( \int\limits_{t_\ell}^{t} \Bigg| \int\limits_{t_\ell}^s \gamma(\mu,s,u)\diff  N(u) \Bigg| \diff s\Bigg)^p\Bigg]\\
&\leq \delta^{p-1}\, \int\limits_{t_\ell}^{t_{\ell+1}}\E\Bigg[ \Bigg| \int\limits_{t_\ell}^s  \gamma(\mu,s,u)\diff  N(u) \Bigg|^p\Bigg] \diff s
\leq \hat c\delta^{p-1}\, \int\limits_{t_\ell}^{t_{\ell+1}}\E\Bigg[\int\limits_{t_\ell}^s  |\gamma(\mu,s,u)|^p\diff  u\Bigg] \diff s\leq c_{28} \delta^{p+1}.
\end{aligned}
\end{equation}
Further, we obtain that there exists a constant $c_{29}\in(0,\infty)$ such that
\begin{equation}
\begin{aligned}\label{mainM3p4}
&\E\Bigg[\Bigg|\int\limits_0^{t_\ell}\sum\limits_{i=0}^{n-1}\Bigg(\int\limits_{t_i}^s\gamma(\mu,t_i,u)\diff u\Bigg)\cdot\mathds{1}_{(t_i,t_{i+1}]}(s)\diff s\Bigg|^p\Bigg] \\
&\leq T^{p-1}\sum\limits_{i=0}^{n-1}\int\limits_{t_i}^{t_{i+1}}\E\Biggl[\Biggl|\int\limits_{t_i}^s\gamma(\mu,t_i,u)\diff u\Biggl|^p\Biggr]\diff s
\leq T^{p-1}\delta^{p-1}\sum\limits_{i=0}^{n-1}\int\limits_{t_i}^{t_{i+1}}\E\Biggl[\int\limits_{t_i}^s|\gamma(\mu,t_i,u)|^p\diff u\Biggr]\diff s\leq c_{29} \delta^{p}.
\end{aligned}
\end{equation}
In addition, it holds
\begin{equation}\label{mainM3p5}
    \mathbb{E}\Bigg[\Biggl|\int\limits_0^{t_\ell}\sum\limits_{i=0}^{n-1}\Bigg(\int\limits_{t_i}^s\gamma(\mu,t_i,u)\diff \tilde N(u)\Bigg)\cdot\mathds{1}_{(t_i,t_{i+1}]}(s)\diff s\Bigg|^p\Bigg]=\E\Big[\big|\tilde Z_{\ell-1}\big|^p\Big],
\end{equation}
where
\begin{equation}
    \tilde Z_k=\sum\limits_{i=0}^k \tilde Y_i, \quad k\in\{0,1,\ldots,n-1\},
\end{equation}
with $Z_{-1}=0$ and
\begin{equation}
    \tilde Y_i=\int\limits_{t_i}^{t_{i+1}}\Bigl(\int\limits_{t_i}^{s}\gamma(\mu,t_i,u)\diff \widetilde N(u)\Bigr)\diff s.
\end{equation}
Let $\mathcal{G}_k:=\mathcal{F}_{t_{k+1}}$. Then it holds that $\{\tilde Z_k,\mathcal{G}_k\}_{k=0,1,\ldots,n-1}$ is a discrete-time martingale, since $\widetilde Z_k$ is adapted to $\mathcal{G}_k$ for $k\in\{0,\ldots,n-1\}$ and Fubini's theorem for conditional expectations, e.g.~\cite{Brooks72}, implies
\begin{equation}
\begin{aligned}
\E\big[\widetilde Z_{k+1} - \widetilde Z_k \big|\mathcal{G}_{k}\big] &=
\E\Bigg[ \int\limits_{t_{k+1}}^{t_{k+2}} \Bigg( \int\limits_{t_{k+1}}^s  \gamma(\mu, t_{k+1}, u) \diff \widetilde N(u) \Bigg) \diff s\Bigg|\mathcal{F}_{t_{k+1}} \Bigg]\\
&= \int\limits_{t_{k+1}}^{t_{k+2}} \E\Bigg[  \int\limits_{t_{k+1}}^s  \gamma(\mu, t_{k+1}, u) \diff \widetilde N(u)  \Bigg|\mathcal{F}_{t_{k+1}} \Bigg]\diff s
=0.
\end{aligned}
\end{equation}
As a result, we conclude from the discrete version of the Burkholder-Davis-Gundy inequality and Jensen's inequality that there exist constants $c_{30},c_{31}\in(0,\infty)$ such that 
\begin{equation}\label{mainM3p6}
    \E\Big[\big|\tilde Z_k\big|^p\Big]\leq c_{30}\E\Bigl[\Bigl(\sum\limits_{i=0}^k|\tilde Y_i|^2\Bigr)^{p/2}\Bigr]\leq c_{30} n^{\frac{p}{2}-1}\sum\limits_{i=0}^{n-1}\E\Big[\big|\tilde Y_i\big|^p\Big]\leq c_{31}\delta^{\frac{p}{2}+1},
\end{equation}
for $k\in\{0,1,\ldots,n-1\}$. Combining \eqref{mainM3p1}, \eqref{mainM3p2}, \eqref{mainM3p3}, \eqref{mainM3p4}, \eqref{mainM3p5}, and \eqref{mainM3p6} we get that there exists a constant $c_{32}\in(0,\infty)$ such that for all $t\in [0,T]$,
\begin{equation}\label{est_tm3}
    \E\Big[\big|\tilde M_3(t)\big|^p\Big]\leq c_{32}\delta^{p\min\{\varrho_1+\frac{1}{p},\frac{1}{2}+\frac{1}{p},1\}}.
\end{equation}
By \eqref{Meq13a}, \eqref{est_tm2}, and \eqref{est_tm3} we obtain that there exists a constant $c_{33}\in(0,\infty)$ such that
\begin{equation}
\label{est_ta1}
\E\Big[\big|\tilde A_1^{(\delta)}(t)\big|^p\Big]\leq c_{33} \delta^{p\min\{\varrho_1+\frac{1}{p},\frac{1}{2}+\frac{1}{p},1\}}.
\end{equation}
Analog to the proof of \cite[equation 33]{Morkisz2020} it can be shown that there exists a constant $c_{34}\in(0,\infty)$ such that for all $t\in [0,T]$ holds
\begin{equation}\label{est_ta2}
    \mathbb{E}\Big[\big|\tilde A^{(\delta)}_2(t)\big|^p\Big]\leq c_{34}\delta^{p(\varrho_1+\frac{1}{2})}.
\end{equation}
Additionally, we estimate $\mathbb{E}[|\tilde A_3^{(\delta)}(t)|^p]$ using \eqref{Meq10}. Therefore, by \eqref{Meq10}, \eqref{est_ta1}, and \eqref{est_ta2} we obtain that there exist constants $c_{35},c_{36}\in(0,\infty)$ such that for all $t\in [0,T]$ holds
\begin{equation}
\label{diff_est_AAd}
    \E\Big[\big|A(t)-A^{(\delta)}(t)\big|^p\Big]\leq c_{35}\delta^{p\min\{\varrho_1+\frac{1}{p},\frac{1}{2}+\frac{1}{p},1\}}+c_{36}\int\limits_0^t\sum\limits_{i=0}^{n-1}\mathbb{E}\Big[\big|X(t_i)-X^{(\delta)}(t_i)\big|^p\Big]\cdot\mathds{1}_{(t_i,t_{i+1}]}(s)\diff s.
\end{equation}
Using \eqref{part_est_XXd}, \eqref{diff_est_BBd},  \eqref{diff_est_CCd}, and \eqref{diff_est_AAd} implies the existence of constants $c_{37},c_{38}\in(0,\infty)$ such that for all $t\in [0,T]$ holds
\begin{equation}
    \begin{aligned}
    &\mathbb{E}\Big[\big|X(t)-X^{(\delta)}_c(t)\big|^p\Big]
    \leq c_{37}\delta^{p\min\{\frac{2}{p},\varrho_1+\frac{1}{p},\varrho_2,\varrho_3\}}+c_{38}\int\limits_0^t\sum\limits_{i=0}^{n-1}\mathbb{E}\Big[\big|X(t_i)-X^{(\delta)}(t_i)\big|^p\Big]\cdot\mathds{1}_{(t_i,t_{i+1}]}(s)\diff s\\
    &\leq c_{37}\delta^{p\min\{\frac{2}{p},\varrho_1+\frac{1}{p},\varrho_2,\varrho_3\}}+c_{38}\int\limits_0^t\sup\limits_{0\leq u\leq s}\mathbb{E}\Big[\big|X(u)-X^{(\delta)}_c(u)\big|^p\Big]\diff s,
    \end{aligned}
\end{equation}
and hence,
\begin{equation}
\label{final_err_est}
    \sup\limits_{0\leq u\leq t}\mathbb{E}\Big[\big|X(u)-X^{(\delta)}_c(u)\big|^p\Big]
    \leq c_{37}\delta^{p\min\{\frac{2}{p},\varrho_1+\frac{1}{p},\varrho_2,\varrho_3\}}+c_{38}\int\limits_0^t\sup\limits_{0\leq u\leq s}\mathbb{E}\Big[\big|X(u)-X^{(\delta)}_c(u)\big|^p\Big]\diff s.
\end{equation}
Since \eqref{MomEstSol} and \eqref{MomEst} guarantee that $\displaystyle{
[0,T]\ni t\mapsto \sup\limits_{0\leq u\leq t} \E\Big[\big|X(u)-X^{(\delta)}_c(u)\big|^p\Big]\in[0,\infty)}$ is bounded and non-decreasing, it is Borel measurable. Therefore, applying Grownall's lemma to \eqref{final_err_est} shows that there exists a constant $C\in(0,\infty)$ such that for all $t\in[0,T]$, 
\begin{equation}
\begin{aligned}\label{Meq45}
\sup_{0\leq u\leq t} \E\Big[\big|X(u)-X^{(\delta)}_c(u)\big|^p\Big]\leq C\delta^{p\min\{\frac{2}{p},\varrho_1+\frac{1}{p},\varrho_2,\varrho_3\}}.
\end{aligned}
\end{equation}
\end{proof}
\begin{remark}
Note that for the classical Milstein scheme $\bar X^{(\delta)}$, which is defined by \eqref{MS} when $\xi_i$ is replaced by $t_i$ for all $i\in\{0,1,\ldots,n-1\}$, there exists a constant $K_9\in(0,\infty)$ such that for all $n\in\N$,
\begin{equation}
\begin{aligned}
\sup_{0\leq t\leq T} \|X(t) - \bar X^{(\delta)}_c(t)\|_{L^p(\Omega)}
\leq K_9\delta^{\min\{\frac{2}{p},\varrho_1,\varrho_2,\varrho_3\}}.
\end{aligned}
\end{equation}
This follows from a straightforward modification of the proof of Theorem \ref{ThmUpperBound} and indicates that the convergence rate of the Milstein scheme is improved by randomization.
\end{remark}
\begin{remark}
In the jump-free case ($\rho=0$) we obtain following the proof of Theorem  \ref{ThmUpperBound} that
there exists a constant $K_{10}\in (0,\infty)$ such that for all $n\in\N$,
\begin{equation}
\begin{aligned}
\sup_{0\leq t\leq T} \|X(t) - X^{(\delta)}_c(t)\|_{L^p(\Omega)}
\leq  K_{10}\delta^{\min\{\varrho_1+\frac{1}{2},\varrho_2\}}.
\end{aligned}
\end{equation}
Thus, we obtain the same upper error bound for the randomized Milstein process as in \cite[Proposition 1]{Morkisz2020} under slightly weaker assumptions on $\mu$ and $\sigma$. Additionally, we recover for $\varrho_2=\min\{\frac{1}{2}+\varrho_1,1\}$ the upper error bound found in \cite{kruse2018} for a two-stage randomized Milstein scheme.
\end{remark}

\section{Lower bounds and optimality}

In this section, we provide lower error bounds and optimality results in the IBC framework, \cite{TWW88}. We set $p=2$ and assume only standard information is available, i.e.~a finite number of point evaluations of $W$ and $N$.
First, we look at the approximation of scalar SDEs which satisfy the JCC. Afterwards, we study the multidimensional case.
\subsection{Scalar case and optimality of the randomized Milstein algorithm}

We provide lower error bound and optimality results for the randomized Milstein algorithm. We assume $p=2$ and the JCC is satisfied, i.e.
\begin{equation}
\label{JCC}
    L_{-1}\sigma(t,y)=L_1\rho(t,y), \ (t,y)\in [0,T]\times\mathbb{R},
\end{equation}
e.g., \cite{PBL2010}. Under this condition, the randomized Milstein algorithm  only uses standard discrete information about $W$, $N$, i.e.~the values $W(t_1),\ldots,W(t_n),N(t_1),\ldots,N(t_n)$. By \eqref{JCC_int} and \eqref{MS} the scheme simplifies to
\begin{equation}
\begin{aligned}\label{MS_JCC}
&X^{(\delta)}(t_0) = X_0,\\
&X^{(\delta)}(t_{i+1}) = X^{(\delta)}(t_{i}) + \mu(\xi_i,X^{(\delta)}(t_{i})) \delta + \sigma(t_{i},X^{(\delta)}(t_{i}))\Delta W_i +\rho(t_{i},X^{(\delta)}(t_{i})) \Delta N_i\\
&\quad\quad\quad\quad \quad\quad+ L_{1}\sigma(t_{i},X^{(\delta)}(t_{i}))I_{t_i,t_{i+1}}(W,W)
+ L_{-1}\rho(t_{i},X^{(\delta)}(t_{i})) I_{t_i,t_{i+1}}(N,N)\\
&\quad\quad\quad\quad \quad\quad+L_{-1}\sigma(t_{i},X^{(\delta)}(t_{i})) \Delta W_i\Delta N_i, \quad\quad \hbox{for } i \in\{0,\ldots, n-1\}.
\end{aligned}
\end{equation}
Hence, if the JCC is assumed then randomized Milstein algorithm is implementable. 

We define the following function classes to provide worst-case error bounds and optimality analyses. For $K\in(0,\infty)$ and $\gamma\in (0,1]$, a function $f:[0,T] \times \mathbb{R} \to \mathbb{R}$ belongs to the function class $F_K^{\gamma}$ if and only if it satisfies for all $t,s\in[0,T]$ and all $y,z\in\mathbb{R}$ 
\begin{itemize}
	\item [(i)] $f\in C^{0,1}\left([0,T]\times\mathbb{R}\right)$,
	\item [(ii)] $|f(0,0)| \leq K$,
	\item [(iii)] $|f(t,y) - f(t,z)| \leq K |y-z|$,
	\item [(iv)] $|f(t,y) - f(s,y)| \leq K(1+|y|)|t-s|^{\gamma}$,
	\item [(v)] $\left| \frac{\partial f}{\partial y}(t,y) - \frac{\partial f}{\partial y}(t,z) \right| \leq K |y-z|$.
\end{itemize}
Here we consider drift coefficients $\mu$ from the class
\begin{displaymath}
	\mathcal{M}^{\varrho_1}_K=\Biggl\{\mu \in F^{\varrho_1}_K \colon \left| \frac{\partial \mu}{\partial y}(t,y) - \frac{\partial \mu}{\partial y}(s,y) \right| \leq K (1+|y|)|t-s|^{\varrho_1} \ \hbox{for all} \ t,s\in [0,T], y\in\mathbb{R}\Biggr\}.
\end{displaymath}
We assume that the diffusion and jump coefficients $(\sigma,\rho)$ are from the class
\begin{equation}
    \begin{aligned}
    &\mathcal{B}^{\varrho_2,\varrho_3}_K = \Bigl\{ (\sigma,\rho) \in F^{\varrho_2}_K\times F^{\varrho_3}_K \colon |L_1 \sigma(t,y) - L_1 \sigma(t,z)|\leq K|y-z|, \\
    &|L_1 \rho(t,y) - L_1 \rho(t,z)|\leq K|y-z|, \  L_{-1}\sigma(t,y)=L_1\rho(t,y), \ \ \hbox{for all} \ t\in [0,T], y,z\in\mathbb{R}\Bigr\}.
    \end{aligned}
\end{equation}
Recall that $L_1f(t,y)=\sigma(t,y)\frac{\partial f}{\partial y}(t,y)$ and $L_{-1}f(t,y)=f(t,y+\rho(t,y))-f(t,y)$. In addition, define for all $p\in[2,\infty)$,
\begin{displaymath}
	\mathcal{J}^p_{K}=\{X_0\colon\Omega\to\mathbb{R} \colon X_0 \ \hbox{is} \ \mathcal{F}_0-\hbox{measurable}, \mathbb{E}[|X_0|^{2p}]\leq K\}.
\end{displaymath}
The class of input data $(\mu,\sigma,\rho,X_0)$ is defined by 
\begin{equation}
    \mathcal{F}(\varrho_1,\varrho_2,\varrho_3,p,K)=\mathcal{M}^{\varrho_1}_K\times\mathcal{B}^{\varrho_2,\varrho_3}_K\times\mathcal{J}^p_{K}.
\end{equation}
We call $\varrho_1,\varrho_2,\varrho_3,p,K,T$ the parameters of the class $\mathcal{F}(\varrho_1,\varrho_2,\varrho_3,p,K)$.

Next, we define the model of computation. An information vector has the form
\begin{equation}
    \begin{aligned}
	\mathcal{N}(\mu, \sigma,\rho,X_0, W,N)=&[\mu (\xi_0,y_0),\ldots,\mu(\xi_{k_1-1},y_{k_1-1}),
    \sigma(t_0,y_0), \ldots, \sigma(t_{k_1-1},y_{k_1-1}), \\
    & \ \ \rho(t_0,y_0), \ldots, \rho(t_{k_1-1},y_{k_1-1}), 
    \frac{\partial\sigma}{\partial y}(t_0,y_0),\ldots, \frac{\partial\sigma}{\partial y}(t_{k_1-1},y_{k_1-1}),\\
    & \ \ \sigma(t_0, z_0),\ldots, \sigma(t_{k_1-1}, z_{k_1-1}),
    \rho(t_0, v_0),\ldots, \rho(t_{k_1-1},v_{k_1-1}),\\
    &\ \ W(s_0),\ldots, W(s_{k_2-1}),
    N(q_0), \ldots, N(q_{k_3-1}),X_0],
    \end{aligned}
\end{equation}
where $k_1$, $k_2$, $k_3\in\N$ and $[\xi_0,\xi_1,\ldots, \xi_{k_1-1}]$ is a random vector on $(\Omega,\F, \mathbb{P})$ with  values in $[0,T]^{k_1}$. We assume that the sigma-field generated by $\xi_0,\xi_1,\ldots, \xi_{k_1-1}$ is independent of $\F_T$. Moreover, $t_0, t_1,\ldots, t_{k_1-1} \in [0,T]$, $s_0, s_1,\ldots, s_{k_2-1}\in [0,T]$, and $q_0, q_1,\ldots, q_{k_3-1}\in [0,T]$ 
are given time points. We assume that $ s_i \neq s_j$, $q_i\neq q_j$ for all $i \neq j$. The evaluation points $y_j, z_j, v_j$ for the spatial variables of $\mu, \sigma$, $\partial \sigma/\partial y$, and $\rho$ are given in an adaptive way with respect to $(\mu, \sigma,\rho, X_0)$ and the standard discrete information about $W$ and $N$. This means that for some measurable mappings $\psi_j$, $j\in\{0,1,\ldots,k_1-1\}$, it holds that
\begin{equation}
    (y_0,z_0,v_0)=\psi_0 (W(s_0),\ldots, W(s_{k_2-1}),N(q_0), \ldots, N(q_{k_3-1}),X_0)
\end{equation}
and 
\begin{equation}
    \begin{aligned}
    &(y_j,z_j,v_j)=\psi_j(\mu (\xi_0,y_0),\ldots,\mu(\xi_{j-1},y_{j-1}),
	\sigma(t_0,y_0), \ldots, \sigma(t_{j-1},y_{j-1}),\\
    & \quad\quad\quad\quad\quad\quad\quad\quad \rho(t_0,y_0), \ldots, \rho(t_{j-1},y_{j-1}), 
    \frac{\partial\sigma}{\partial y}(t_0,y_0),\ldots, \frac{\partial\sigma}{\partial y}(t_{j-1},y_{j-1}),\\
    &\quad\quad\quad\quad\quad\quad\quad\quad  \sigma(t_0, z_0),\ldots, \sigma(t_{j-1}, z_{j-1}),
    \rho(t_0, v_0),\ldots, \rho(t_{j-1},v_{j-1}),\\
    & \quad\quad\quad\quad\quad\quad\quad\quad W(s_0),\ldots, W(s_{k_2-1}),
    N(q_0), \ldots, N(q_{k_3-1}),X_0).
    \end{aligned}
\end{equation}
The total number of evaluations of $\mu,\sigma,\rho$, $W$, and $N$ is given by $l=6k_1+k_2+k_3$.

Any algorithm $\mathcal{A}$ that computes an approximation to $X(T)$ using the information $\mathcal{N}(\mu, \sigma,\rho,X_0, W,N)$ is of the form
\begin{equation}
\label{alg_def_1}
    \mathcal{A}(\mu, \sigma,\rho,X_0, W,N)=\varphi(\mathcal{N}(\mu, \sigma,\rho,X_0, W,N)),
\end{equation}
where $\varphi:\mathbb{R}^{3k_1+k_2+k_3+1}\to\mathbb{R}$ is a Borel measurable function. For a fixed $n\in\mathbb{N}$ we denote by $\Phi_n$ the class of all algorithms \eqref{alg_def_1} with total number of evaluations $l\leq n$.

For $(\mu,\sigma,\rho,X_0)\in\mathcal{F}(\varrho_1,\varrho_2,\varrho_3,p,K)$ we define the error of $\mathcal{A}\in\Phi_n$ as 
\begin{equation}
    e^{(2)}(\mathcal{A},\mu, \sigma,\rho,X_0, W,N)=\|\mathcal{A}(\mu, \sigma,\rho,X_0, W,N)-X(\mu, \sigma,\rho,X_0)(T)\|_2.
\end{equation}
The worst-case error of $\mathcal{A}$ in a subclass $\mathcal{G}$ of $\mathcal{F}(\varrho_1,\varrho_2,\varrho_3,p,K)$ is defined by
\begin{equation}
    e^{(2)}(\mathcal{A},\mathcal{G},W,N)=\sup\limits_{(\mu, \sigma,\rho,X_0)\in\mathcal{G}}e^{(2)}(\mathcal{A},\mu, \sigma,\rho,X_0, W,N),
\end{equation}
while the $n$-th minimal error in $\mathcal{G}$ is 
\begin{equation}
    e^{(2)}_n(\mathcal{G},W,N)=\inf\limits_{\mathcal{A}\in\Phi_n}e^{(2)}(\mathcal{A},\mathcal{G},W,N).
\end{equation}
The aim is to find sharp bounds for $e^{(p)}_n(\mathcal{F}(\varrho_1,\varrho_2,\varrho_3,p,K),W,N)$, i.e.~lower and upper error bounds which are equal up to constants.

The randomized Milstein algorithm can be written as 
\begin{equation}
    \mathcal{A}^{RM}_n(\mu,\sigma,\rho,X_0,W,N)=X^{(\delta)}(T),
\end{equation}
where $X^{(\delta)}(T)$ is defined in \eqref{MS}. It holds that  $\mathcal{A}^{RM}_n\in\Phi_{8n}$.
\begin{theorem}
\label{opt_rm_scalar}
    It holds that
    \begin{equation}
        e^{(2)}_n(\mathcal{F}(\varrho_1,\varrho_2,\varrho_3,2,K),W,N)=\Theta(n^{-\min\{\varrho_1+\frac{1}{2},\varrho_2,\varrho_3\}})
    \end{equation}
    as $n\to +\infty$.
\end{theorem}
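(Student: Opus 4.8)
Write $\theta:=\min\{\varrho_1+\tfrac12,\varrho_2,\varrho_3\}$, and note that since $\varrho_2\in(0,1]$ one has $\min\{\tfrac{2}{2},\varrho_1+\tfrac12,\varrho_2,\varrho_3\}=\theta$. The statement pairs a matching upper and lower bound. \emph{For the upper bound}, the plan is to read it off Theorem \ref{ThmUpperBound}: every $(\mu,\sigma,\rho,X_0)\in\mathcal{F}(\varrho_1,\varrho_2,\varrho_3,2,K)$ satisfies Assumption \ref{Ass} with $p=2$ and with constants depending only on the parameters of the class, since the defining inequalities of $\mathcal{M}^{\varrho_1}_K$, $\mathcal{B}^{\varrho_2,\varrho_3}_K$ and $\mathcal{J}^2_K$ are exactly \ref{Ass1}--\ref{Ass5}. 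Hence Theorem \ref{ThmUpperBound} with $\delta=T/n$, together with $\mathcal{A}^{RM}_n(\mu,\sigma,\rho,X_0,W,N)=X^{(\delta)}(T)=X^{(\delta)}_c(T)$, yields a constant $C$ uniform over the class with $e^{(2)}(\mathcal{A}^{RM}_n,\mathcal{F}(\varrho_1,\varrho_2,\varrho_3,2,K),W,N)\le C n^{-\theta}$; because $\mathcal{A}^{RM}_n\in\Phi_{8n}$ this gives $e^{(2)}_{8n}(\mathcal{F}(\varrho_1,\varrho_2,\varrho_3,2,K),W,N)\le C n^{-\theta}$, and the inclusion $\Phi_{8\lfloor m/8\rfloor}\subseteq\Phi_m$ then upgrades it to $e^{(2)}_m(\mathcal{F}(\varrho_1,\varrho_2,\varrho_3,2,K),W,N)=O(m^{-\theta})$.

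\emph{For the lower bound} I would show $e^{(2)}_n\ge c\,n^{-\theta}$ by constructing three one-parameter subfamilies of the input class that separately force the rates $n^{-(\varrho_1+1/2)}$, $n^{-\varrho_2}$ and $n^{-\varrho_3}$; the maximum of the three is $\ge c\,n^{-\theta}$. For the $\varrho_2$-rate, take $\sigma_h(t,y):=h(t)$ space-independent with $h\in F^{\varrho_2}_K$, and $\mu\equiv\rho\equiv0$, $X_0\equiv0$; then $L_1\sigma_h\equiv0$ and the jump-commutativity identity holds trivially, so $(0,\sigma_h,0,0)\in\mathcal{F}$, and the solution at $T$ equals $\int_0^T h(t)\,dW(t)$. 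I would split $[0,T]$ into $\Theta(n)$ subintervals of length $\Theta(1/n)$; at most $k_1\le n/6$ of them contain one of the (non-adaptive) nodes $t_0,\dots,t_{k_1-1}$, and on each of the remaining ones I place a smooth bump of height $\Theta(n^{-\varrho_2})$ supported strictly inside the subinterval, letting $h_1$ be their sum. For a sufficiently small amplitude $h_1\in F^{\varrho_2}_K$ (each bump has $\varrho_2$-Hölder seminorm $O(1)$, the supports being disjoint), and since $\sigma_{h_1}$ is space-independent the information vector of any $\mathcal{A}\in\Phi_n$ on the instance $h=h_1$ coincides entry by entry with that on $h\equiv0$ (all of $\mu$, $\sigma$, $\rho$, $\partial_y\sigma$ read as $0$, and $W$, $N$, $X_0$ unchanged), so the triangle inequality gives $e^{(2)}(\mathcal{A},\mathcal{F},W,N)\ge\tfrac12\|\int_0^T h_1(t)\,dW(t)\|_{L^2(\Omega)}=\tfrac12\big(\int_0^T h_1(t)^2\,dt\big)^{1/2}\ge c\,n^{-\varrho_2}$. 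The $\varrho_3$-rate is obtained identically with $\rho_k(t,y):=k(t)$, using $\|\int_0^T k_1(t)\,dN(t)\|_{L^2(\Omega)}^2=\lambda\int_0^T k_1^2+\lambda^2\big(\int_0^T k_1\big)^2\ge c\,n^{-2\varrho_3}$.

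For the $\varrho_1$-rate a deterministic fooling argument only yields $n^{-\varrho_1}$ (aligned bumps add coherently), so the plan is a Bakhvalov-type averaging argument — this is the genuinely delicate step and exactly where the extra $\tfrac12$ in the exponent originates. Take $\mu_g(t,y):=g(t)$, $\sigma\equiv\rho\equiv0$, $X_0\equiv0$, so the solution at $T$ is the deterministic number $\int_0^T g$, and $(\mu_g,0,0,0)\in\mathcal{F}$ whenever $g\in F^{\varrho_1}_K$ (the extra inequality of $\mathcal{M}^{\varrho_1}_K$ is void since $\partial_y\mu_g\equiv0$). Fix $M=\lceil n/3\rceil$ disjointly supported bumps $\phi_1,\dots,\phi_M$ of height $\Theta(n^{-\varrho_1})$ and width $\Theta(1/n)$, each of $\varrho_1$-Hölder seminorm $O(1)$ with $a_k:=\int_0^T\phi_k$ of size $\Theta(n^{-\varrho_1-1})$, put $g_\varepsilon:=\sum_k\varepsilon_k\phi_k$ for $\varepsilon\in\{-1,1\}^M$, and equip $\{-1,1\}^M$ with the uniform prior. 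Any $\mathcal{A}\in\Phi_n$ queries $\mu_{g_\varepsilon}$ only at $k_1\le n/6$ points $\xi_j$ whose joint law is independent of $\varepsilon$, and the bumps are pairwise disjointly supported, so for each realization the information vector depends on at most $n/6$ of the coordinates $\varepsilon_k$, while the remaining $\ge M-n/6\ge n/6$ coordinates stay conditionally uniform and independent of the output; hence
\[
\E_\varepsilon\big[e^{(2)}(\mathcal{A},\mu_{g_\varepsilon},0,0,0,W,N)^2\big]\ \ge\ \E\Big[\operatorname{Var}\!\Big(\sum_{k\ \mathrm{unqueried}}\varepsilon_k a_k\ \Big|\ \cdots\Big)\Big]\ \ge\ \tfrac{n}{6}\min_{k}a_k^2\ \ge\ c\,n^{-2\varrho_1-1}.
\]
This gives $\sup_\varepsilon e^{(2)}(\mathcal{A},\mu_{g_\varepsilon},0,0,0,W,N)\ge c^{1/2}n^{-\varrho_1-1/2}$ for every $\mathcal{A}\in\Phi_n$, i.e.\ $e^{(2)}_n(\mathcal{F},W,N)\ge c^{1/2}n^{-\varrho_1-1/2}$. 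Combining the three lower bounds with the upper bound proves the theorem. The main obstacle is making this last averaging/independence step fully rigorous for \emph{randomized} algorithms whose sampling times $\xi_j$ are themselves random (and possibly adaptive): this is precisely the mechanism converting the classical drift-quadrature rate $n^{-\varrho_1}$ into $n^{-\varrho_1-1/2}$, matching the bound of Theorem \ref{ThmUpperBound}.
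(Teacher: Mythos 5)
Your proof follows the paper's route exactly: the upper bound is read off Theorem \ref{ThmUpperBound} together with $\mathcal{A}^{RM}_n\in\Phi_{8n}$, and the lower bound is obtained from the same three subclasses of space-independent coefficients, reducing the problem to randomized quadrature of a H\"older function (rate $n^{-(\varrho_1+\frac{1}{2})}$), approximation of $\int_0^T h(t)\,\diff W(t)$ (rate $n^{-\varrho_2}$), and approximation of $\int_0^T k(t)\,\diff N(t)$ (rate $n^{-\varrho_3}$). The only difference is that the paper dispatches these three component lower bounds by citation (to Novak, to Morkisz--Przyby{\l}owicz, and to Przyby{\l}owicz--Sobieraj--St\c{e}pie\'{n}), whereas you sketch self-contained fooling and Bakhvalov-type averaging arguments; your sketches are sound in the stated information model (fixed time nodes, adaptivity only in space, randomization independent of the input), and the delicate randomized-quadrature step you flag is precisely the cited result of Novak, so nothing essential is missing.
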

\begin{proof} Since $e^{(2)}_n(\mathcal{F}(\varrho_1,\varrho_2,\varrho_3,2,K),W,N)\leq e^{(2)}(\mathcal{A}^{RM}_n,\mathcal{F}(\varrho_1,\varrho_2,\varrho_3,2,K),W,N)$, Theorem \ref{ThmUpperBound} implies the upper bound  $O(n^{-\min\{\varrho_1+\frac{1}{2},\varrho_2,\varrho_3\}})$ for $e^{(2)}_n(\mathcal{F}(\varrho_1,\varrho_2,\varrho_3,2,K),W,N)$.

For the lower bounds let $\mathcal{A}$ be any algorithm from $\Phi_n$ that uses at most $n$ evaluations of $(\mu,\sigma,\rho)$, $W$, and $N$. We consider the following subclasses of  $\mathcal{F}(\varrho_1,\varrho_2,\varrho_3,2,K)$:
\begin{equation*}
    \mathcal{G}_{1}(\varrho_1,1,1,2,K) =\mathcal{\bar M}_{K}^{\varrho_1} \times \{(0,0)\} \times \{0\},
\end{equation*}
where
\begin{equation*}
    \mathcal{\bar M}_{K}^{\varrho_1}=\{ \mu\in \mathcal{M}_{K}^{\varrho_1} \ | \ \mu(t,x) = \mu(t,0) \text{ for all } t \in [0,T], x \in \mathbb{R} \},
\end{equation*}
and
\begin{equation*}
    \mathcal{G}_{2}(1,\varrho_2,1,2,K)=\{0\} \times \mathcal{\bar B}^{\varrho_2,1}_K \times \{0\}, 
\end{equation*}
where
\begin{equation*}
\mathcal{\bar B}^{\varrho_2,1}_K=\Bigl\{ (\sigma,0) \in \mathcal{B}^{\varrho_2,1}_K \,\Bigl|\, \sigma(t,y)=\sigma(t,0) \ \hbox{for all} \ t\in [0,T], y\in\mathbb{R}\Bigr\},
\end{equation*}
and
\begin{equation*}
    \mathcal{G}_{3}(1,1,\varrho_3,2,K)=\{0\} \times \mathcal{\bar B}^{1,\varrho_3}_K \times \{0\}, 
\end{equation*}
where
\begin{equation*}
\mathcal{\bar B}^{1,\varrho_3}_K=\Bigl\{ (0,\rho) \in \mathcal{B}^{1,\varrho_3}_K \,\Bigl|\, \rho(t,y)=\rho(t,0) \ \hbox{for all} \ t\in [0,T], y\in\mathbb{R}\Bigr\}.
\end{equation*}
For $(\mu,\sigma,\rho,X_0) \in \mathcal{G}_{1}(\varrho_1,1,1,2,K)$ holds $\displaystyle{X(\mu,\sigma,\rho,X_0)(T)=\int_{0}^{T}\mu(t,0)\diff t}$. Since $k_1=O(n)$ by \cite[Section 2.2.9, Proposition 2]{Novak1988} we obtain that
\begin{equation}
    e(\mathcal{A},\mathcal{G}_{1}(\varrho_1,1,1,2,K))=\Omega(n^{-(\varrho_1+\frac{1}{2})}).
\end{equation}
Further, for $(\mu,\sigma,\rho,X_0) \in \mathcal{G}_{2}(1,\varrho_2,1,2,K)$ holds $\displaystyle{X(\mu,\sigma,\rho,X_0)(T)=\int_{0}^{T}\sigma(t,0)\diff W(t)}$. Since $k_2=O(n)$, \cite[Proposition 5.1(i)]{PMPP2014} gives
\begin{equation}
    e(\mathcal{A},\mathcal{G}_{2}(1,\varrho_2,1,2,K))=\Omega(n^{-\varrho_2}).
\end{equation}
Finally, for $(\mu,\sigma,\rho,X_0) \in \mathcal{G}_{3}(1,1,\varrho_3,2,K)$ holds $\displaystyle{X(\mu,\sigma,\rho,X_0)(T)=\int_{0}^{T}\rho(t,0)\diff N(t)}$. Since $k_3=O(n)$, \cite[Lemma 6]{Przybylowicz2021} yields
\begin{equation}
    e(\mathcal{A},\mathcal{G}_{3}(1,1,\varrho_3,2,K))=\Omega(n^{-\varrho_3}).
\end{equation}
Due to the fact that $\mathcal{G}_{1}(\varrho_1,1,1,2,K)\cup\mathcal{G}_{2}(1,\varrho_2,1,2,K)\cup\mathcal{G}_{3}(1,1,\varrho_3,2,K)\subset\mathcal{F}(\varrho_1,\varrho_2,\varrho_3,2,K)$, we obtain
\begin{equation}
    \begin{aligned}
    &e(\mathcal{A},\mathcal{F}(\varrho_1,\varrho_2,\varrho_3,2,K))\\
    &\geq\max\{e(\mathcal{A},\mathcal{G}_{1}(\varrho_1,1,1,2,K)), e(\mathcal{A},\mathcal{G}_{2}(1,\varrho_2,1,2,K)),e(\mathcal{A},\mathcal{G}_{3}(1,1,\varrho_3,2,K))\}\\
    &=\Omega(n^{-\min\{\varrho_1+\frac{1}{2},\varrho_2,\varrho_3\}}).
    \end{aligned}
\end{equation}
Together with the upper bound, this proves the claim.
\end{proof}
\begin{remark} 
\label{gain_rm}
For $\varrho_2=\varrho_3=1$ and $\varrho_1\in (1/2,1)$ we compare the worst case errors for the classical Euler--Maruyama algorithm $\mathcal{A}^E_n$, randomized Euler--Maruyama algorithm $\mathcal{A}^{RE}_n$, classical Milstein algorithm $\mathcal{A}^M_n$, and randomized Milstein algorithm $\mathcal{A}^{RE}_n$ in the class $\mathcal{F}(\varrho_1,\varrho_2,\varrho_3,2,K)$. It holds that
\begin{equation}
\begin{aligned}
    &e^{(2)}(\mathcal{A}^E_n,\mathcal{F}(\varrho_1,\varrho_2,\varrho_3,2,K),W,N)=O(n^{-1/2}), \
    e^{(2)}(\mathcal{A}^{RE}_n,\mathcal{F}(\varrho_1,\varrho_2,\varrho_3,2,K),W,N)=O(n^{-1/2}),\\
    &e^{(2)}(\mathcal{A}^M_n,\mathcal{F}(\varrho_1,\varrho_2,\varrho_3,2,K),W,N)=O(n^{-\varrho_1}), \
    e^{(2)}(\mathcal{A}^{RM}_n,\mathcal{F}(\varrho_1,\varrho_2,\varrho_3,2,K),W,N)=O(n^{-1}).
    \end{aligned}
\end{equation}
Consequently, the randomized Milstein algorithm outperforms the other (classical) algorithms in this setting.
\end{remark}
\subsection{Multidimensional case and optimality of the Euler--Maruyama algorithm}

In this section, we discuss lower error bounds for approximating solutions of systems of jump-diffusion SDEs if only standard information about $W$ and $N$ is available. In order to establish suitable lower bounds we extend results from \cite{Clark1980} and analyse the following jump-diffusion L\'evy's area
\begin{equation}
    \begin{aligned}\label{LA1}
    &J(N,W)=I_{0,T}(N,W) = \int\limits_0^T\int\limits_0^{t-} \diff N(s) \diff W(t) = \int\limits_0^T N(t-)\diff W(t)= \int\limits_0^T N(t)\diff W(t).
    \end{aligned}
\end{equation}
The last equality holds because $W$ is continuous and $N(\cdot)$ and $N(\cdot-)$ differ at most in finitely many points. It is essential to note that $J(N,W)=X(T)$, where $X$ is the solution of the two-dimensional SDE
\begin{equation}
    \begin{aligned}\label{2dim_sde}
    & dY(t)=\diff N(t),\\
    & dX(t)= Y(t)\diff W(t), \ t\in [0,T].
    \end{aligned}
\end{equation}

We consider an arbitrary algorithm of the form 
\begin{equation}
    \begin{aligned}\label{LA4}
    \mathcal{A}_n(N,W) = \varphi_n(\mathcal{N}_n(N,W))
    \end{aligned}
\end{equation}
to approximate \eqref{LA1}. Here the function $\varphi_n\colon\R^{2n} \to \R$ is Borel-measurable and
\begin{equation}
    \begin{aligned}\label{LA5}
    \mathcal{N}_n(N,W) = [N(t_1),\ldots,N(t_n), W(t_1),\ldots, W(t_n)],
    \end{aligned}
\end{equation}
where
\begin{equation}
\label{mesh_1}
    0= t_0 < t_1 < \ldots < t_n = T 
\end{equation}
is a fixed discretization of $[0,T]$. Further, we consider the trapezoidal method $\mathcal{A}_n^T(N,W)$ based on the mesh \eqref{mesh_1}, which is defined as
\begin{equation}
\label{trap_alg_def}
    \mathcal{A}_n^T(N,W)=\sum_{i=0}^{n-1} \frac{1}{2} \big(W(t_{i+1})-W(t_i)\big) \big(N(t_{i+1})+ N(t_i)\big).
\end{equation}

\begin{theorem}
\label{lb_Levy_rea}
For the trapezoidal method \eqref{trap_alg_def} based on the equidistant mesh $t_i=iT/n$, $i\in\{0,1,\ldots,n\}$, 
it holds that
\begin{equation}
    \lim_{n\to \infty} n^{1/2}\cdot \| J(N,W) - \mathcal{A}_n^T (N,W)\|_2 
    =\lim\limits_{n\to \infty}n^{1/2}\cdot\inf\limits_{\mathcal{A}_n}\| J(N,W) - \mathcal{A}_n (N,W)\|_2=  \frac{\lambda^{1/2} T}{2}.
\end{equation}
Hence, $\mathcal{A}_n^T(N,W)$ is the optimal method among all methods of the form \eqref{LA4}.
\end{theorem}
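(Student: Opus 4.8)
The statement combines an exact asymptotic for the $L^2$-error of the trapezoidal rule with a matching lower bound over all methods of the form \eqref{LA4}. The plan is to reduce everything to interval-wise quantities, compute the trapezoidal error exactly, and then to show that on any mesh the trapezoidal rule coincides with the conditional expectation $\E[J(N,W)\mid\mathcal N_n(N,W)]$, which is the $L^2$-optimal approximation based on the available information.

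\emph{Step 1 (error of the trapezoidal rule).} For an arbitrary mesh $0=t_0<\dots<t_n=T$ with $h_i=t_{i+1}-t_i$, I would write $J(N,W)=\sum_{i=0}^{n-1}\int_{t_i}^{t_{i+1}}N(t)\diff W(t)$ and subtract the $i$-th summand of \eqref{trap_alg_def}. Using $N(t_{i+1})+N(t_i)=2N(t_i)+(N(t_{i+1})-N(t_i))$ and $W(t_{i+1})-W(t_i)=\int_{t_i}^{t_{i+1}}\diff W(t)$, the local error collapses to $E_i=\int_{t_i}^{t_{i+1}}\bigl(M_i(t)-\tfrac12 M_i(t_{i+1})\bigr)\diff W(t)$ with $M_i(t)=N(t)-N(t_i)$. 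Conditionally on $\mathcal F_\infty^N$ the integrand is deterministic and $W$ is still a Brownian motion (being independent of $\mathcal F_\infty^N$), so each $E_i$ is a Wiener integral and $\E[E_iE_j\mid\mathcal F_\infty^N]=\int_0^T\bigl(M_i(t)-\tfrac12 M_i(t_{i+1})\bigr)\mathds{1}_{(t_i,t_{i+1}]}(t)\cdot\bigl(M_j(t)-\tfrac12 M_j(t_{j+1})\bigr)\mathds{1}_{(t_j,t_{j+1}]}(t)\diff t$, which vanishes for $i\neq j$ by disjointness of supports and equals $\int_{t_i}^{t_{i+1}}\bigl(M_i(t)-\tfrac12 M_i(t_{i+1})\bigr)^2\diff t$ for $i=j$. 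Writing $N$ as the sum of its independent Poisson increments on $[t_i,t]$ and $[t,t_{i+1}]$ gives $M_i(t)-\tfrac12 M_i(t_{i+1})=\tfrac12\bigl((N(t)-N(t_i))-(N(t_{i+1})-N(t))\bigr)$ and hence $\E[(M_i(t)-\tfrac12 M_i(t_{i+1}))^2]=\tfrac14\bigl(\lambda h_i+\lambda^2(2(t-t_i)-h_i)^2\bigr)$; integrating over $t\in[t_i,t_{i+1}]$ and summing yields $\|J-\mathcal A_n^T\|_2^2=\sum_{i=0}^{n-1}\bigl(\tfrac{\lambda h_i^2}{4}+\tfrac{\lambda^2 h_i^3}{12}\bigr)$. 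For the equidistant mesh $h_i=T/n$ this is $\tfrac{\lambda T^2}{4n}+\tfrac{\lambda^2T^3}{12n^2}$, so $n^{1/2}\|J-\mathcal A_n^T\|_2\to\tfrac{\lambda^{1/2}T}{2}$.

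\emph{Step 2 (optimality and lower bound).} Since $\E[J\mid\mathcal N_n(N,W)]$ minimizes $\|J-\varphi_n(\mathcal N_n(N,W))\|_2$ over all Borel $\varphi_n$, it is enough to prove $\E[J\mid\mathcal N_n]=\mathcal A_n^T(N,W)$ for the given mesh. I would condition $\int_{t_i}^{t_{i+1}}N(t)\diff W(t)$ first on $\mathcal F_\infty^N$ together with $W(t_i),W(t_{i+1})$: the path of $N$ is then a fixed \cadlag\ function of bounded variation, so $[N,W]=0$, and integration by parts together with $\E[W(t)\mid W(t_i),W(t_{i+1})]=W(t_i)+\tfrac{t-t_i}{h_i}(W(t_{i+1})-W(t_i))$ gives $\E\bigl[\int_{t_i}^{t_{i+1}}N(t)\diff W(t)\mid\mathcal F_\infty^N,W(t_i),W(t_{i+1})\bigr]=\tfrac{W(t_{i+1})-W(t_i)}{h_i}\int_{t_i}^{t_{i+1}}N(t)\diff t$. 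Taking a further conditional expectation, using independence of $W$ and $N$ and the Poisson-bridge mean $\E[N(t)-N(t_i)\mid N(t_{i+1})-N(t_i)]=\tfrac{t-t_i}{h_i}(N(t_{i+1})-N(t_i))$, one finds $\int_{t_i}^{t_{i+1}}\E[N(t)\mid N(t_i),N(t_{i+1})]\diff t=\tfrac{h_i}{2}(N(t_{i+1})+N(t_i))$, hence $\E[\int_{t_i}^{t_{i+1}}N(t)\diff W(t)\mid\mathcal N_n]=\tfrac12(W(t_{i+1})-W(t_i))(N(t_{i+1})+N(t_i))$; summing over $i$ gives $\E[J\mid\mathcal N_n]=\mathcal A_n^T(N,W)$. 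Consequently, for any method $\mathcal A_n$ of the form \eqref{LA4} on any mesh,
\begin{align*}
\|J-\mathcal A_n(N,W)\|_2 &\geq \|J-\E[J\mid\mathcal N_n(N,W)]\|_2 = \|J-\mathcal A_n^T(N,W)\|_2\\
&= \Bigl(\sum_{i=0}^{n-1}\bigl(\tfrac{\lambda h_i^2}{4}+\tfrac{\lambda^2 h_i^3}{12}\bigr)\Bigr)^{1/2} \geq \tfrac{\lambda^{1/2}}{2}\Bigl(\sum_{i=0}^{n-1}h_i^2\Bigr)^{1/2} \geq \tfrac{\lambda^{1/2}T}{2\,n^{1/2}},
\end{align*}
the last step by Cauchy--Schwarz and $\sum_{i=0}^{n-1}h_i=T$. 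Hence $n^{1/2}\inf_{\mathcal A_n}\|J-\mathcal A_n\|_2\geq\tfrac{\lambda^{1/2}T}{2}$ for every $n$, and together with Step 1 (equidistant mesh) both limits in the statement equal $\tfrac{\lambda^{1/2}T}{2}$ and the equidistant trapezoidal rule is optimal among all methods \eqref{LA4}.

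\emph{Main obstacle.} The Poisson-moment computations and the Cauchy--Schwarz estimate $\sum h_i^2\geq T^2/n$ are routine; the delicate part is making the identity $\E[J\mid\mathcal N_n]=\mathcal A_n^T$ fully rigorous — in particular the integration-by-parts/Fubini argument for $\E[\int_{t_i}^{t_{i+1}}N(t)\diff W(t)\mid\mathcal F_\infty^N,W(t_i),W(t_{i+1})]$ (the cross-variation vanishes and the boundary terms cancel because $N$ has bounded variation and is independent of $W$, but this should be written out), and invoking the independence of increments (Markov property) of $N$ to reduce the conditional expectation to the four endpoint values $N(t_i),N(t_{i+1}),W(t_i),W(t_{i+1})$. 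One should also record that $W$ remains a Brownian motion under the filtration enlarged by $\mathcal F_\infty^N$, which legitimizes the Wiener-integral computation in Step 1.
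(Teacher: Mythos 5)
Your proof is correct and follows the same overall architecture as the paper's (optimal method $=$ conditional expectation $=$ trapezoidal rule; exact error formula; Cauchy--Schwarz over meshes), but you execute the two technical cores differently. For the central identity $\E[J(N,W)\mid\mathcal{N}_n(N,W)]=\mathcal{A}_n^T(N,W)$, the paper approximates each $\int_{t_i}^{t_{i+1}}N(t)\diff W(t)$ by Riemann-type sums $J_m^i$, applies conditional independence of $N$ and $W$ given $\sigma(\mathcal{N}_n(N,W))$ (its Proposition \ref{PropCondIndSP}) termwise together with the bridge formulas, and then identifies the $L^2$- and a.s.-limits via uniqueness of limits in probability; you instead integrate by parts pathwise (using that $N$ has finite variation and $[N,W]=0$) and push the conditional expectation through the resulting Stieltjes integral. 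Your route avoids the limit-exchange argument but shifts the burden onto the conditional Fubini step and the Markov-property reduction to the two neighbouring grid points --- both of which you correctly flag as the points needing rigour, and both of which are covered by the same appendix tools the paper uses (Theorem \ref{ThmCondInd}, Proposition \ref{PropCondIndSP}, and the cited bridge lemmas). For the error formula, the paper works with the non-adapted step process $\hat N_n$ and the It\^o isometry under the enlarged filtration $\widetilde{\mathcal{F}}_t=\sigma(\mathcal{F}_t^W\cup\mathcal{F}_\infty^N)$, whereas you condition on $\mathcal{F}_\infty^N$ and compute Wiener integrals with (conditionally) deterministic integrands; these are equivalent, and your integrand $N(t)-\tfrac12(N(t_i)+N(t_{i+1}))$ is exactly the paper's $N(t)-\hat N_n(t)$, yielding the same value $\tfrac{\lambda}{4}\sum_i h_i^2+\tfrac{\lambda^2}{12}\sum_i h_i^3$. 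The remaining steps (projection lower bound, $\sum_i h_i^2\geq T^2/n$, passage to the limit) coincide with the paper's.
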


\begin{proof}
The projection property for the conditional expectation implies for any algorithm \eqref{LA4} that 
\begin{equation}
    \begin{aligned}\label{LA6}
    \E\Big[\big| J(N,W) - \mathcal{A}_n(N,W)\big|^2\Big] 
    \geq \E\Big[\big| J(N,W) - \E\big[J(N,W)\big|\mathcal{N}_n (N,W)\big]\big|^2\Big].
    \end{aligned}
\end{equation}
This is because $\mathcal{A}_n(N,W)$ is measurable with respect to the sigma-algebra generated by $\mathcal{N}_n(N,W)$. Therefore, we also have
\begin{equation}
\label{lower_b_at}
\inf_{\mathcal{A}_n}\E\Big[\big| J(N,W) - \mathcal{A}_n(N,W)\big|^2\Big] \geq \inf\limits_{0=t_0<t_1\ldots< t_n=T}\E\Big[\big| J(N,W) - \E\big[J(N,W)\big|\mathcal{N}_n (N,W)\big]\big|^2\Big].
\end{equation}
Hence, we need to compute 
\begin{equation}
    \begin{aligned}\label{LA7}
    \E\big[ J(N,W)\big|\mathcal{N}_n (N,W)\big] = \sum_{i=0}^{n-1} \E\Bigg[ \int\limits_{t_i}^{t_{i+1}} N(t) \diff W(t) \Big|\mathcal{N}_n (N,W) \Bigg].
    \end{aligned}
\end{equation}
For all $i\in\{0,\ldots,n-1\}$ we define 
\begin{equation}
J_{t_i, t_{i+1}}(N,W) = \int\limits_{t_i}^{t_{i+1}} N(t) \diff W(t).
\end{equation}
The definition of the It\^o integral implies for all $i\in\{0,\ldots,n-1\}$ that
\begin{equation}
    \begin{aligned}\label{LA9}
    J_{t_i, t_{i+1}}(N,W) = \lim_{m\to\infty} J_m^i (N,W) \text{ in } L^2(\R).
    \end{aligned}
\end{equation}
Here 
\begin{equation}
    \begin{aligned}\label{LA10}
    J_m^i (N,W) 
    = \sum_{j=0}^{m-1} N(s_j^i) (W(s_{j+1}^i) - W(s_{j}^i)),
    \end{aligned}
\end{equation}
with $s_j^i = t_i +j(t_{i+1}-t_i)/m$ for all $j\in\{0,\ldots,m\}$. 
Further, we define $\Delta W_j^i =  W(s_{j+1}^i) - W(s_{j}^i)$
for all $i\in\{1,\ldots,n-1\}$ and $j\in\{1,\ldots,m-1\}$. Then it holds that
\begin{equation}
    \begin{aligned}\label{LA12}
    \E\big[J_m^i (N,W) \big|\mathcal{N}_n(N,W)\big] 
    = \sum_{j=0}^{m-1} \E\big[N(s_j^i) \Delta W_j^i\big| \mathcal{N}_n(N,W)\big].
    \end{aligned}
\end{equation}
Since by \cite[Lemma B.18]{AK2021} the processes $N$ and $W$ are conditionally independent given the sigma-algebra generated by $\mathcal{N}_n(N,W)$,
we obtain
\begin{equation}
    \begin{aligned}\label{LA13}
    \E\big[N(s_j^i) \Delta W_j^i\big| \mathcal{N}_n(N,W)\big]
    = \E\big[N(s_j^i) \big| \mathcal{N}_n(N)\big]\cdot\E\big[\Delta W_j^i\big| \mathcal{N}_n(W)\big].
    \end{aligned}
\end{equation}
Using \cite[Lemma 8]{Hertling2001} and \cite[Lemma 3.1]{PP2016}, we obtain for all $s \in[t_i, t_{i+1}]$ 
\begin{equation}
    \begin{aligned}\label{LA14}
    & \E\big[N(s) \big| \mathcal{N}_n(N)\big]
    = \frac{N(t_{i+1}) (s -t_i) + N(t_i)(t_{i+1}-s)}{t_{i+1}-t_i}
    \end{aligned}
\end{equation}
and
\begin{equation}
    \label{LA15}
     \E\big[\Delta W_j^i\big| \mathcal{N}_n(W)\big]
    = \frac{\big(W(t_{i+1})-W(t_i)\big) (s_{j+1}^i -s_{j}^i)}{t_{i+1}-t_i}.
\end{equation}
Plugging \eqref{LA13},\eqref{LA14}, and \eqref{LA15} into \eqref{LA12}, we obtain
\begin{equation}
    \begin{aligned}\label{LA16}
    &\E\big[J_m^i (N,W) \big|\mathcal{N}_n(N,W)\big] 
    = \sum_{j=0}^{m-1} \E\big[N(s_j^i) \big| \mathcal{N}_n(N)\big]\cdot\E\big[\Delta W_j^i\big| \mathcal{N}_n(W)\big]\\
        &= \frac{W(t_{i+1})-W(t_i)}{t_{i+1}-t_i} \sum_{j=0}^{m-1} \E\big[N(s_j^i) \big| \mathcal{N}_n(N)\big]\cdot
        (s_{j+1}^i -s_{j}^i).
    \end{aligned}
\end{equation}
Note that $\displaystyle{\sum_{j=0}^{m-1} \E\big[N(s_j^i) \big| \mathcal{N}_n(N)\big] (s_{j+1}^i -s_{j}^i)}$ is a (pathwise) Riemann approximation of the stochastic process $\big(\E\big[N(s) \big| \mathcal{N}_n(N)\big]\big)_{s\in [t_i,t_{i+1}]}$ and has continuous sample paths. Hence, it holds for all $i\in\{0,\ldots,m-1\}$ that
\begin{equation}
    \begin{aligned}\label{LA17}
    \lim_{m\to\infty} \E\big[J_m^i (N,W) \big|\mathcal{N}_n(N,W)\big] 
    = \frac{W(t_{i+1})-W(t_i)}{t_{i+1}-t_i} \int\limits_{t_i}^{t_{i+1}} \E\big[N(t)\big|\mathcal{N}_n(N)\big] \diff t \text{ a.s.}
    \end{aligned}
\end{equation}
In addition, \eqref{LA10} and Jensen's inequality for the conditional expectation imply
\begin{equation}
    \begin{aligned}\label{LA18}
    &\E\Big[\big| \E[J_{t_i, t_{i+1}}(N,W) |\mathcal{N}_n(N,W)] - \E[J_{m}^i(N,W) |\mathcal{N}_n(N,W)]\big|^2\Big]\\
    &\leq \E\Big[\big| J_{t_i, t_{i+1}}(N,W) - J_{m}^i(N,W) \big|^2\Big]
    \to 0 \text{ as } m\to\infty.
    \end{aligned}
\end{equation}
Hence, by \eqref{LA18} holds
\begin{equation}
    \begin{aligned}\label{LA19}
    &\E\big[J_{m}^i(N,W) \big|\mathcal{N}_n(N,W)\big] \to \E\big[J_{t_i, t_{i+1}}(N,W) \big|\mathcal{N}_n(N,W)\big] \text{ as } m\to\infty \text{ in } L^2(\Omega),
    \end{aligned}
\end{equation}
and by \eqref{LA17} holds
\begin{equation}
    \begin{aligned}\label{LA20}
    &\E\big[J_{m}^i(N,W) \big|\mathcal{N}_n(N,W)\big] \to \frac{W(t_{i+1})-W(t_i)}{t_{i+1}-t_i} \int\limits_{t_i}^{t_{i+1}} \E\big[N(t)\big|\mathcal{N}_n(N)\big] \diff t \text{ as } m\to\infty \text{ a.s.}
    \end{aligned}
\end{equation}
Convergence in $L^2(\Omega)$ as well as almost sure convergence imply convergence in probability. Additionally, by uniqueness of the limit in probability, we get that for all $i\in\{0,\ldots,m-1\}$,
\begin{equation}
    \begin{aligned}\label{LA21}
    &\E\big[J_{t_i, t_{i+1}}(N,W) \big|\mathcal{N}_n(N,W)\big] = \frac{W(t_{i+1})-W(t_i)}{t_{i+1}-t_i} \int\limits_{t_i}^{t_{i+1}} \E\big[N(t)\big|\mathcal{N}_n(N)\big] \diff t \text{ a.s.}
    \end{aligned}
\end{equation}
Further, it holds that
\begin{equation}
    \label{LA22}
    \int\limits_{t_i}^{t_{i+1}} \E\big[N(t)\big|\mathcal{N}_n(N)\big] \diff t 
    = \frac{1}{2} \big(N(t_{i+1})+ N(t_i)\big)(t_{i+1} -t_i).
\end{equation}
When we plug \eqref{LA22} in \eqref{LA21}, we get for all $i\in\{0,\ldots,m-1\}$ that
\begin{equation}
    \begin{aligned}\label{LA23}
    \E\big[J_{t_i, t_{i+1}}(N,W) \big|\mathcal{N}_n(N,W)\big] 
    = \frac{1}{2} (W(t_{i+1})-W(t_i)) (N(t_{i+1})+ N(t_i)).
    \end{aligned}
\end{equation}
Combining \eqref{LA23} and \eqref{LA7} yields $\E[ J(N,W)|\mathcal{N}_n (N,W)] = \mathcal{A}^T_n(N,W)$, i.e.~$\E[ J(N,W)|\mathcal{N}_n (N,W)]$ corresponds to the trapezoidal method.

Next, we calculate the error of the trapezoidal method to get the minimal possible error among all methods of the form \eqref{LA4}. 
Consider the step process given for all $t\in[0,T]$ by
\begin{equation}
    \begin{aligned}\label{LA25}
    \hat N_n(t) := \sum_{i=0}^{n-1} \mathds{1}_{(t_i, t_{i+1}]}(t) \frac{N(t_i) + N(t_{i+1})}{2}.
    \end{aligned}
\end{equation}
Note that the process $(\hat N_n(t))_{t\in[0,T]}$ is not adapted to the filtration $(\mathcal{F}_t)_{t\in[0,T]}$. However, it is adapted to the sigma-algebra generated by $ \mathcal{F}_t^W$ and $\mathcal{F}_T^N$, called $\mathcal{\widetilde F}_t$, for all $t\in[0,T]$. Additionally,  $(W(t))_{t\in[0,T]}$ is still a scalar Wiener process with respect to the filtration $(\mathcal{\widetilde F}_t)_{t\in[0,T]}$, since $\mathcal{F}_T^N$ and $\mathcal{F}_T^W$ are independent. Hence,   
\begin{equation}
J(\hat N_n, W) = \int\limits_0^T \hat N_n (t) \diff W(t)
\end{equation} 
is a well-defined  It\^o integral of the $(\mathcal{\widetilde F}_t)_{t\in[0,T]}$-simple process $(\hat N_n(t))_{t\in[0,T]}$ and it holds
\begin{equation}
    \begin{aligned}\label{LA27}
    J(\hat N_n, W) = \sum_{i=0}^{n-1}  \frac{N(t_i) + N(t_{i+1})}{2} (W(t_{i+1}) -W(t_i))=\mathcal{A}_n^T (N,W).
    \end{aligned}
\end{equation}
Using that $(N(t)-\hat N_n(t))_{t\in[0,T]}$ is a $(\mathcal{\widetilde F}_t)_{t\in[0,T]}$-progressively measurable  process, the It\^o isometry 
and Jensen's inequality we obtain
\begin{equation}
    \begin{aligned}\label{LA29}
    &\E\Big[ \big| J(N,W) - \mathcal{A}_n^T (N,W)\big|^2\Big]
    = \sum_{i=0}^{n-1} \int\limits_{t_i}^{t_{i+1}} \E\Big[ \big|N(t) - \hat N_n(t)\big|^2 \Big] \diff t\\
    &= \frac{1}{4} \sum_{i=0}^{n-1} \int\limits_{t_i}^{t_{i+1}} \Big(\E\Big[\big(N(t) - N(t_i)\big)^2 \Big] - 2\E\Big[ N(t) - N(t_i) \Big]\cdot\E\Big[ N(t_{i+1})-N(t)\Big]\\
    &\quad\quad\quad\quad + \E\Big[ \big(N(t_{i+1}) -N(t)\big)^2 \Big]\Big) \diff t\\
    &= \frac{\lambda}{4} \sum_{i=0}^{n-1} (t_{i+1}-t_i)^2 + \frac{\lambda^2}{12} \sum_{i=0}^{n-1} (t_{i+1}-t_i)^3\geq \frac{\lambda}{4n} \Big(\sum_{i=0}^{n-1} (t_{i+1}-t_i)\Big)^2 + \frac{1}{n^2}\,\frac{\lambda^2}{12} \Big(\sum_{i=0}^{n-1} (t_{i+1}-t_i)\Big)^3\\
    &\geq \frac{\lambda T^2}{4n} + \frac{\lambda^2 T^3}{12 n^2}.
    \end{aligned}
\end{equation}
This implies
\begin{equation}
    \inf\limits_{0=t_0<t_1<\ldots t_n=T}\E\Big[ \big| J(N,W) - \mathcal{A}_n^T (N,W)\big|^2\Big]\geq \frac{\lambda T^2}{4n} + \frac{\lambda^2 T^3}{12 n^2}.
\end{equation}
Hence, using \eqref{lower_b_at} we conclude that
\begin{equation}
    \begin{aligned}
    \label{LA31}
    n^{1/2}\cdot\inf_{\mathcal{A}_n}  \| J(N,W) - \mathcal{A}_n(N,W) \|_{L^2(\Omega)} \geq \sqrt{\frac{\lambda T^2}{4}+\frac{\lambda^2 T^3}{12n}}.
    \end{aligned}
\end{equation}
For the trapezoidal method $\mathcal{A}^T_n(N,W)$ based on the equidistant mesh $t_i = iT/n$, $i\in\{0,1,\ldots,n\}$ it follows from \eqref{LA29} that 
\begin{equation}
    \label{LA32}
    \E\Big[ \big| J(N,W) - \mathcal{A}_n^T (N,W)\big|^2\Big] 
    = \frac{\lambda T^2}{4n}  + \frac{\lambda^2 T^3}{12n^2},
\end{equation}
and hence 
\begin{equation}
\label{low_b_1}
    n^{1/2}\cdot\inf\limits_{\mathcal{A}_n} \| J(N,W) - \mathcal{A}_n (N,W)\|_2\leq n^{1/2}\cdot \| J(N,W) - \mathcal{A}^T_n (N,W)\|_2= \sqrt{\frac{\lambda T^2}{4}+\frac{\lambda^2 T^3}{12n}}.
\end{equation}
Combining \eqref{LA31} and \eqref{low_b_1} we obtain
\begin{equation}
    \begin{aligned}\label{LA33}
    & n^{1/2}\cdot\inf\limits_{\mathcal{A}_n} \| J(N,W) - \mathcal{A}_n (N,W)\|_2 
    = \sqrt{\frac{\lambda T^2}{4}+\frac{\lambda^2 T^3}{12n}}.
    \end{aligned}
\end{equation}
This and \eqref{LA32} prove the claim.
\end{proof}

\begin{remark}\label{RemMultSDE}
Consider any class of coefficients of multidimensional SDEs for which \eqref{2dim_sde} is a subproblem. Then by Theorem \ref{lb_Levy_rea}, in the worst case setting with respect to the coefficients, the error cannot be smaller than $\Omega(n^{-1/2})$. Therefore, no matter if the JCC \eqref{JCC} is satisfied or not, we can apply the Euler--Maruyama (or randomized Euler--Maruyama) scheme in order to achieve the optimal $L^2(\Omega)$-error bound $O(n^{-1/2})$, for example, \cite{PMPP2014,PMPP2017}.
This is in contrast to the scalar case with JCC, where the randomized Milstein scheme outperforms the Euler scheme.
\end{remark}

\begin{remark}
In Theorems \ref{opt_rm_scalar} and \ref{lb_Levy_rea} we have considered only the $L^2$-error. Matching upper and lower bounds that depend on $p$ remain an open problem. Our numerical experiments in Section \ref{num} suggest that for jump-diffusion SDEs the error indeed depends on $p$.
\end{remark}

\section{Numerical experiments}
\label{num}

We implement\footnote{The program code is available as ancillary file from the arXiv page of this paper (arXiv:2212.00411).} the randomized Milstein algorithm for the SDE 
\begin{equation}
    \begin{aligned}\label{ExSDE}
    \left\{
    \begin{array}{lll}
    \diff X(t) = \sin(M\cdot X(t) (1+t)^{\varrho_1}) \diff t + \cos(M\cdot X(t) \cdot (1+t)^{\varrho_2}) \diff W(t)& \, \\
    \qquad\qquad\qquad\qquad \qquad\qquad\qquad\qquad \qquad + \Big(-X(t) +\frac{\pi}{2M\cdot (1+t)^{\varrho_2}}\Big) \diff N(t),  & t\in[0,1], \\
    X(0) = 1. & \, \\
    \end{array}
    \right.
    \end{aligned}
\end{equation}
This SDE has already been implemented in \cite{Morkisz2020} in the jump-free case. We choose the jump coefficient such that the JCC is satisfied.
The verification of Assumption \ref{Ass} is straight forward, for the JCC \eqref{JCC}, see Remark \ref{class_jcc}.
In our simulations we set $\lambda = 100$, $M =100$,  $\varrho_1 =0.1$, and $\varrho_2 = 0.6$. 

We estimate the $L^p$-error similar as in \cite[p.~14]{PS20} by
\begin{equation}
    \begin{aligned}
    \operatorname{error}(k) = \operatorname{mean}\big( \big| X^{(k)}(T) - X^{(k-1)}(T)\big|^p \big)^{\frac{1}{p}}.
    \end{aligned}
\end{equation}
Here, $X^{(k)}(T)$ is the approximation of $X(T)$ with step size $\delta^{(k)}$, where $\delta^{(k)} = 2^{-k}$ for $k\in\N$. The $\operatorname{mean}$ is taken over $2^{16}$ sample paths.

\begin{remark}
The interesting part of the implementation is the simulation of the randomization, i.e.~the values $\xi_i$ are computed. We proceed by  
first simulating independent uniformly distributed random variables $\xi_i$ on the corresponding intervals for the finest discretization grid. Then we iteratively compute the values for the discretization grid with doubled step size as follows: One time interval in the larger grid consists of two time intervals of equal length in the finer grid. For those two intervals we have simulated two values $\xi_i$. Now we simulate an independent Bernoulli$(0.5)$ random variable that determines which of the values $\xi_i$ we take. This choice is then uniformly distributed on the interval of the large grid and hence consistent with the randomized Milstein algorithm.
\end{remark}

For $p\in [2,\infty)$ we obtain by Theorem \ref{ThmUpperBound} the theoretical convergence rate \[\displaystyle{\min\Big\{\frac{2}{p}, \varrho_1 + \frac{1}{p}, \varrho_2, \varrho_2\Big\} = \min\Big\{\frac{2}{p}, 0.1 + \frac{1}{p}, 0.6\Big\}}.\] For $p =1$ we take as theoretical convergence rate the same rate as for $p=2$, because the $L^1$-error can be estimated by the $L^2$-error using the Cauchy-Schwarz inequality.
In Figure \ref{fig:1} we plot the $\log_2(\operatorname{error}(k))$ over $\log_2(\delta^{(k)}))$ for $p\in\{1,2,3,4\}$ and the corresponding theoretical convergence orders.

\begin{figure}[h]
 \centering
 \includegraphics[width=16cm]{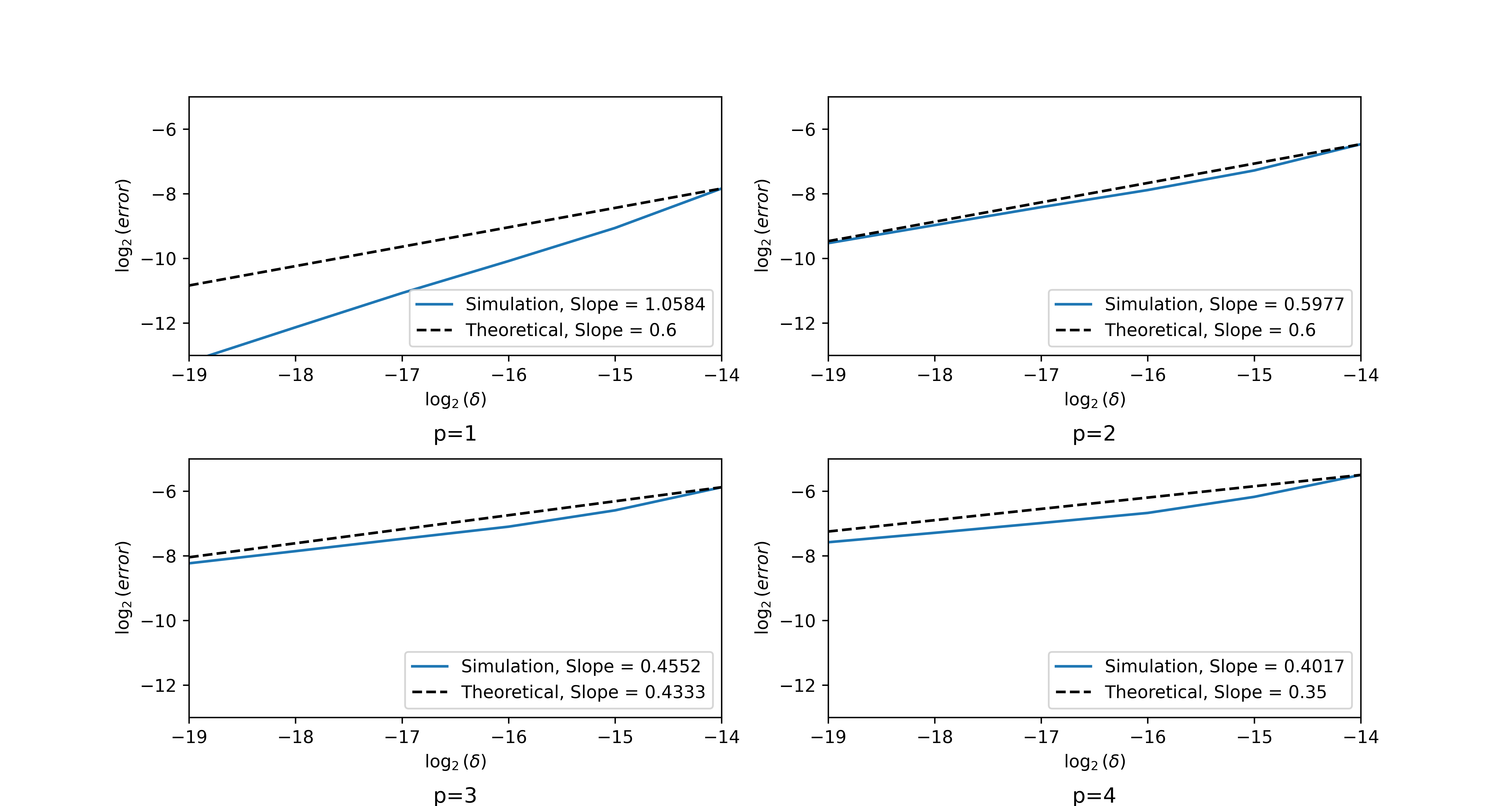}
 \caption{Error estimates and theoretical convergence order for $p\in\{1,2,3,4\}$}
 \label{fig:1}
\end{figure}

We see that the observed convergence order is decreasing with increasing $p$. Further we notice that for $p =1$ the convergence of the simulation is higher than the theoretical convergence rate. This is reasonable because we took the rate of the $L^2$-error.
For $p=2$ we observe that the simulation confirms the theoretical results; the slope of the simulation matches the convergence rate, which we proved to be optimal. Also for $p=3$ and $p=4$ the simulations confirm the theoretical results, since the simulation converges at least as fast as the theoretically obtained upper bound; we have not proven any lower bound.

Next, we regress the slope of the simulated $\log_2(\operatorname{error}(k))$ in dependence of the corresponding $\log_2(\delta^{(k)}))$ for all $p\in\{1,\ldots,8\}$ and compare it to the theoretical upper bounds on the convergence rates we have proven, see Figure \ref{fig:2}.
We observe that for the simulations the convergence order is dependent on $p$, which confirms also this theoretical finding.

\begin{figure}[h]
 \centering
 \includegraphics[width=12cm,height=6cm,keepaspectratio]{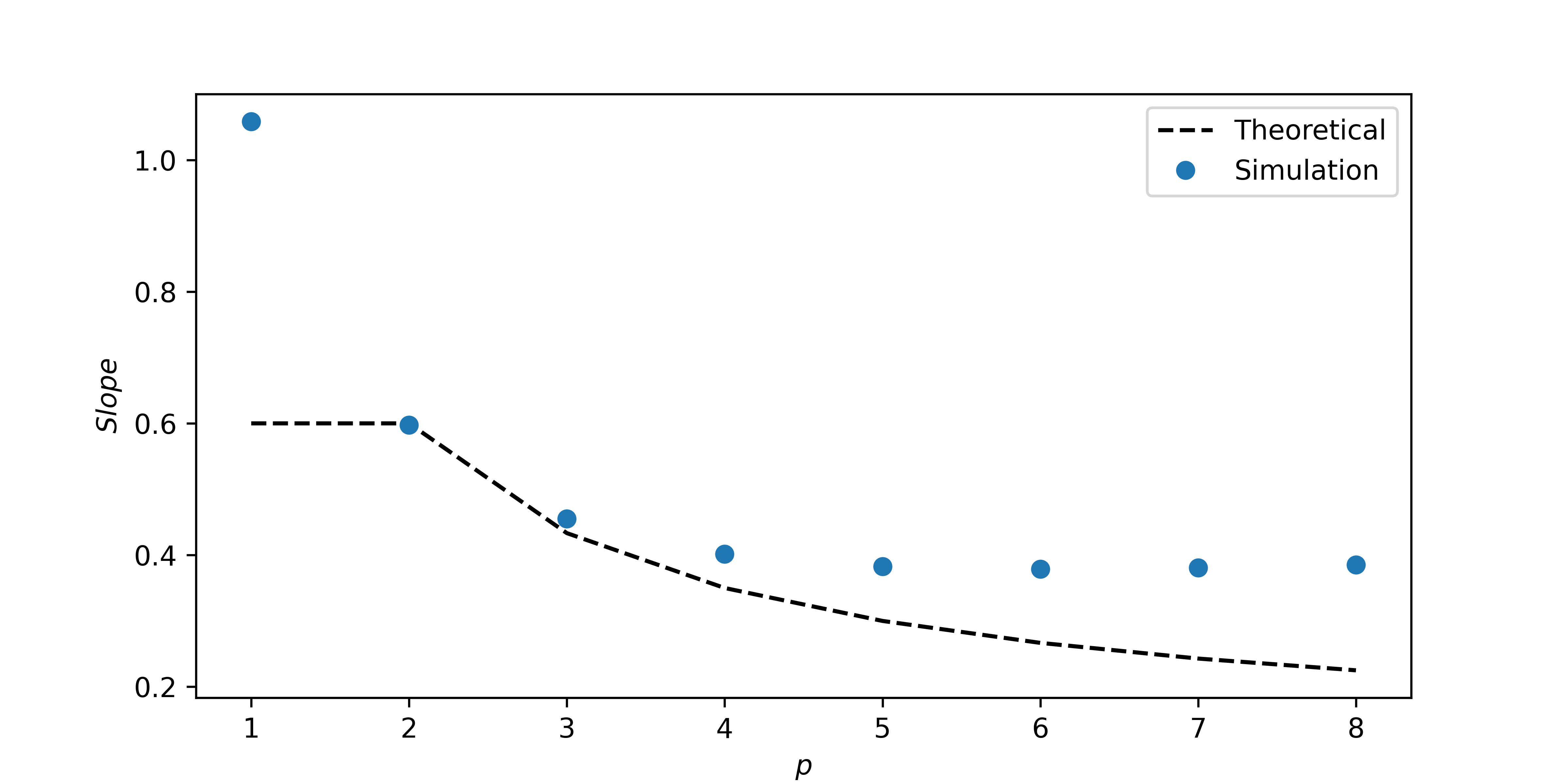}
 \caption{Slopes of the simulation (estimated by linear regression) in comparison to theoretical convergence rates}
 \label{fig:2}
\end{figure}

\begin{remark}\rm
\label{linear_jcc}
For the very simple example of SDEs with linear coefficients, we did not observe an $L^p$ dependence of the error.
\end{remark}

\begin{remark}\rm
\label{class_jcc}
Let us assume that the diffusion coefficient is of the form $\sigma(t,y)=F(\alpha(t) y+\beta(t))$ while the jump coefficient $\rho(t,y)=-y+\gamma(t)$ for some functions $F:\mathbb{R}\to\mathbb{R}$ and $\alpha,\beta,\gamma:[0,T]\to\mathbb{R}$. Moreover, let us assume that there exists $x_0\in\mathbb{R}$ such that 
\begin{itemize}
    \item $F(x_0)=0$,
    \item $\alpha(t)\cdot\gamma(t)+\beta(t)=x_0$ for all $t\in [0,T]$.
\end{itemize}
Then the JCC \eqref{JCC} is satisfied for the pair $(\sigma,\rho)$. This provides a new class of functions $(\sigma,\rho)$ satisfying the JCC which may, in contrast to the class considered in
\cite{PBL2010}, be nonlinear.
\end{remark}


\appendix

\section{Appendix}
The proof of the following lemma is straightforward and will be omitted.
\begin{lemma}
\label{MEstL}
Under Assumption \ref{Ass} there exists a constant $K_7\in(0,\infty)$ such that for $f\in\{\mu,\sigma,\rho\}$ and for all $t_1,t_2,t,u\in [0,T]$,
\begin{equation}
\begin{aligned}\label{MEstL1}
| \alpha_1(f,t,u)| &\leq K_7 (1+|X(u)|),\\
| \beta(f,t,u)| &\leq K_7 (1+|X(u)|),\\
|\beta(\mu,t_1,u)-\beta(\mu,t_2,u)|&\leq K_7 (1+|X(u)|^2)\cdot |t_1-t_2|^{\varrho_1},\\
| \gamma(f,t,u)| &\leq K_7 (1+|X(u-)|),\\
|\gamma(\mu,t_1,u)-\gamma(\mu,t_2,u)|&\leq K_7(1+|X(u-)|)|t_1-t_2|^{\varrho_1}.
\end{aligned}
\end{equation}
\end{lemma}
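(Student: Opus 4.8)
The plan is to verify the five inequalities one at a time, reading each quantity off its definition in \eqref{LFormulas} and bounding it by the a priori estimates already established: the linear growth bound \eqref{LinGrowth} for $\mu,\sigma,\rho$, the derivative bound \eqref{BddDer}, and the time-regularity conditions in Assumption \ref{Ass} \ref{Ass2} and \ref{Ass3}. At the end one takes $K_7$ to be the maximum of the finitely many constants produced along the way. The appearance of the left limit $X(u-)$ in $\gamma$ (rather than $X(u)$) causes no difficulty, since every bound used is pointwise in the spatial variable.

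For the pointwise estimates: since $\alpha_1(f,t,u)=f'_y(t,X(u))\,\mu(u,X(u))$ and $\beta(f,t,u)=f'_y(t,X(u))\,\sigma(u,X(u))$, multiplying $|f'_y|\leq K_1$ from \eqref{BddDer} by $|\mu(u,\cdot)|,|\sigma(u,\cdot)|\leq K_4(1+|\cdot|)$ from \eqref{LinGrowth} yields the first two bounds with constant $K_1K_4$; and since $\gamma(f,t,u)=f(t,X(u-)+\rho(u,X(u-)))-f(t,X(u-))$, the spatial Lipschitz estimate \eqref{AssLip} together with \eqref{LinGrowth} applied to $\rho$ gives $|\gamma(f,t,u)|\leq K_1|\rho(u,X(u-))|\leq K_1K_4(1+|X(u-)|)$. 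For the time increment of $\beta(\mu,\cdot,u)$, I would use $\beta(\mu,t_1,u)-\beta(\mu,t_2,u)=\bigl(\mu'_y(t_1,X(u))-\mu'_y(t_2,X(u))\bigr)\sigma(u,X(u))$, bound the first factor by $K_2(1+|X(u)|)|t_1-t_2|^{\varrho_1}$ via Assumption \ref{Ass} \ref{Ass3} and the second by $K_4(1+|X(u)|)$ via \eqref{LinGrowth}, and then absorb the square using $(1+a)^2\leq 2(1+a^2)$, which gives the third bound with constant $2K_2K_4$.

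The only step that benefits from a little care is the time increment of $\gamma(\mu,\cdot,u)$. Writing $a=X(u-)$ and $b=X(u-)+\rho(u,X(u-))$, I would \emph{not} integrate $\mu'_y(t_1,\cdot)-\mu'_y(t_2,\cdot)$ over $[a,b]$ --- that route picks up the linear-growth weight in Assumption \ref{Ass} \ref{Ass3} and leaves a spurious factor $(1+|X(u-)|)^2$ --- but instead split $\gamma(\mu,t_1,u)-\gamma(\mu,t_2,u)=\bigl(\mu(t_1,b)-\mu(t_2,b)\bigr)-\bigl(\mu(t_1,a)-\mu(t_2,a)\bigr)$ and apply the time-H\"older estimate for $\mu$ (Assumption \ref{Ass} \ref{Ass2}) to each bracket, obtaining the bound $K_1\bigl((1+|b|)+(1+|a|)\bigr)|t_1-t_2|^{\varrho_1}$; since $1+|b|\leq(1+K_4)(1+|X(u-)|)$ by \eqref{LinGrowth} applied to $\rho$, the fifth bound holds with constant $K_1(2+K_4)$. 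Setting $K_7=\max\{K_1K_4,\,2K_2K_4,\,K_1(2+K_4)\}$ completes the proof.
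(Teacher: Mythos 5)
Your proof is correct, and since the paper omits the proof of Lemma \ref{MEstL} as ``straightforward,'' your argument is precisely the intended direct verification from \eqref{LFormulas}, \eqref{LinGrowth}, \eqref{BddDer}, and Assumption \ref{Ass} \ref{Ass2}--\ref{Ass3}. Your remark on the fifth bound is well taken: splitting the increment of $\gamma(\mu,\cdot,u)$ into two time-H\"older brackets is indeed what yields the stated linear weight $(1+|X(u-)|)$, whereas integrating $\mu'_y(t_1,\cdot)-\mu'_y(t_2,\cdot)$ would only give a quadratic one.
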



The following estimate is a direct consequence of the H\"older, the Burkholder-Davis-Gundy, and the Kunita inequalitiy, see \cite{kunita2004}.
\begin{lemma}\label{BDGaKunita}
Let $q\in [2,\infty)$, $a,b\in[0,T]$ with $a<b$, $Z\in\{\operatorname{Id},W,N\}$, $Y = (Y(t))_{t\in[a,b]}$ is a predictable stochastic process such that 
\begin{equation}
\E\Big[\int\limits_a^b |Y(t)|^q \diff t\Big] < \infty
\end{equation}
Then there exists a constant $\hat c\in(0,\infty)$ such that for all $t\in[a,b]$ it holds that
\begin{equation}
\begin{aligned}
\E\Biggl[ \sup_{s\in[a,t]} \Biggl|\int\limits_a^s Y(u) \diff Z(u)\Biggl|^q \Biggr] \leq \hat c \int\limits_a^t \E[|Y(u)|^q]\diff u. 
\end{aligned}
\end{equation}
\end{lemma}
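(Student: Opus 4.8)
The plan is to treat the three choices of $Z$ separately, since in each case a different classical inequality governs the moments of the stochastic integral; the remark preceding the lemma already announces this division of labour (Hölder, Burkholder--Davis--Gundy, Kunita). A device used in all three cases is the following consequence of Jensen's inequality applied to the normalized measure $\diff u/(t-a)$ on $[a,t]$: for $r\in\{1,2\}$, $q\geq r$, and $s\leq t$,
\[
\Big(\int\limits_a^s |Y(u)|^r\diff u\Big)^{q/r} \leq \Big(\int\limits_a^t |Y(u)|^r\diff u\Big)^{q/r}\leq (t-a)^{q/r-1}\int\limits_a^t |Y(u)|^q\diff u,
\]
which converts an $L^1$- or $L^2$-in-time integral into the $L^q$-in-time integral appearing on the right-hand side of the claim. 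Since $t\leq T$, I would always bound the prefactor by a power of $T$.

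For $Z=\operatorname{Id}$ I would apply this with $r=1$: pathwise $\sup_{s\in[a,t]}\big|\int_a^s Y(u)\diff u\big|^q\leq (t-a)^{q-1}\int_a^t |Y(u)|^q\diff u$, and taking expectations gives the claim with $\hat c=T^{q-1}$. For $Z=W$, the process $s\mapsto\int_a^s Y(u)\diff W(u)$ is a continuous martingale with quadratic variation $\int_a^s |Y(u)|^2\diff u$, so the Burkholder--Davis--Gundy inequality bounds the $q$-th moment of its running supremum by $C_q\,\E\big[\big(\int_a^t |Y(u)|^2\diff u\big)^{q/2}\big]$; the device with $r=2$ then yields the assertion with $\hat c=C_q\,T^{q/2-1}$.

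The case $Z=N$ needs the most care. Here I would first center the Poisson process by writing $N(u)=\widetilde N(u)+\lambda u$ with $\widetilde N$ the compensated (martingale) Poisson process, so that
\[
\int\limits_a^s Y(u)\diff N(u)=\int\limits_a^s Y(u)\diff \widetilde N(u)+\lambda\int\limits_a^s Y(u)\diff u.
\]
The drift summand is controlled by the $Z=\operatorname{Id}$ estimate, up to the factor $\lambda^q$. For the martingale summand I would invoke Kunita's inequality \cite{kunita2004}, which for an integral against the compensated jump measure of $N$ produces both an $L^2$-in-time and an $L^q$-in-time term: $\E\big[\sup_{s\in[a,t]}\big|\int_a^s Y(u)\diff\widetilde N(u)\big|^q\big]$ is bounded by a constant times $\lambda^{q/2}\,\E\big[\big(\int_a^t |Y(u)|^2\diff u\big)^{q/2}\big]+\lambda\,\E\big[\int_a^t |Y(u)|^q\diff u\big]$. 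Applying the device with $r=2$ to the first summand turns it into the desired form, the second already being of that form. Finally I would merge the martingale and drift contributions via $(x+y)^q\leq 2^{q-1}(x^q+y^q)$ and absorb all $q$-, $\lambda$-, and $T$-dependent factors into a single constant $\hat c$.

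The only genuinely delicate point is this last case: one must read Kunita's inequality for a simple integrand $Y(u)$ as the special case $H(u,z)=Y(u)$ of an integral against the jump measure of $N$, whose compensator is $\lambda\,\diff u$ (times the point mass at jump size one). Once this identification is made, the two terms of Kunita's inequality specialize exactly as stated above. The passage to the supremum poses no extra difficulty, since both the Burkholder--Davis--Gundy and the Kunita inequalities already bound the running maximum rather than merely the terminal value, and the $Z=\operatorname{Id}$ bound is pathwise.
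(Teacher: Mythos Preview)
Your proof is correct and follows exactly the route the paper indicates: it merely states that the lemma is a direct consequence of the H\"older, Burkholder--Davis--Gundy, and Kunita inequalities (citing \cite{kunita2004}) without writing out details, and your case-by-case treatment supplies precisely those details. The decomposition $N=\widetilde N+\lambda\cdot$ for the Poisson case and the Jensen/H\"older device to pass from $L^r$- to $L^q$-in-time integrals are the natural steps implicit in the paper's one-line justification.
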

\section*{Acknowledgements}

V. Schwarz and M. Sz\"olgyenyi are supported by the Austrian Science Fund (FWF): DOC 78.



\vspace{2em}
\centerline{\underline{\hspace*{16cm}}}

\noindent Pawe{\l} Przyby{\l}owicz \\
Faculty of Applied Mathematics, AGH University of Krakow, Al.~Mickiewicza 30, 30-059 Krakow, Poland\\
pprzybyl@agh.edu.pl\\

\noindent Verena Schwarz \Letter \\
Department of Statistics, University of Klagenfurt, Universit\"atsstra\ss{}e 65-67, 9020 Klagenfurt, Austria\\
verena.schwarz@aau.at\\

\noindent Michaela Sz\"olgyenyi \\
Department of Statistics, University of Klagenfurt, Universit\"atsstra\ss{}e 65-67, 9020 Klagenfurt, Austria\\
michaela.szoelgyenyi@aau.at\\


\end{document}